\newtheorem{theorem}{Theorem}[section]
\newtheorem{lemma}[theorem]{Lemma}
\newtheorem{remark}{Remark}
\newcommand{\al}{\alpha}
\newcommand{\bt}{\beta}
\newcommand{\s}{\sigma}
\newcommand{\be}{\begin{equation}}
\newcommand{\ee}{\end{equation}}
\newcommand{\bea}{\begin{eqnarray}}
\newcommand{\eea}{\end{eqnarray}}
\newcommand{\no}{\nonumber}
\numberwithin{equation}{section}
\begin{document}
\title{Orthogonal polynomials for the singularly perturbed Laguerre weight, Hankel determinants and asymptotics}
\author{Chao Min\thanks{School of Mathematical Sciences, Huaqiao University, Quanzhou 362021, China; Email: chaomin@hqu.edu.cn}\: and Xiaoqing Wu\thanks{School of Mathematical Sciences, Huaqiao University, Quanzhou 362021, China}}


\date{August 7, 2026}
\maketitle
\begin{abstract}
Based on the work of Chen and Its [{\em J. Approx. Theory} {\bf 162} ({2010}) {270--297}], we further study orthogonal polynomials with respect to the singularly perturbed Laguerre weight $w(x;t,\alpha) = {x^\alpha}{\mathrm e^{ - x-\frac{t}{x}}}, \; x\in\mathbb{R}^{+},\;\alpha > -1,\; t\geq  0$. By using the ladder operators and associated compatibility conditions for orthogonal polynomials with general Laguerre-type weights, we derive the second-order differential equation satisfied by the orthogonal polynomials, a system of difference equations and a system of differential-difference equations for the recurrence coefficients. We also investigate the properties of the zeros of the orthogonal polynomials. Using Dyson's Coulomb fluid approach together with the discrete system, we obtain the large $n$ asymptotic expansions of the recurrence coefficients, the sub-leading coefficient of the monic orthogonal polynomials, the Hankel determinant and the normalized constant for fixed $t>  0$. It is found that all the asymptotic expansions are singular at $t=0$. We also study the long-time ($t\rightarrow+\infty$) asymptotics of these quantities explicitly for fixed $n\in\mathbb{N}$ from the Toda-type system.
\end{abstract}

$\mathbf{Keywords}$: Singularly perturbed Laguerre weight; Orthogonal polynomials; Zeros;

Hankel determinants; Large $n$ asymptotics; Long-time asymptotics.

$\mathbf{Mathematics\:\: Subject\:\: Classification\:\: 2020}$: 42C05, 33C45, 41A60.

\section{Introduction}
In the significant work \cite{CI2010}, Chen and Its studied a singular linear statistic of the Laguerre unitary ensemble (LUE) inspired by an integrable quantum field theory at finite temperature \cite{Lu}. This problem is equivalent to the study of the Hankel determinant and orthogonal polynomials for a singularly perturbed Laguerre weight
\begin{equation}\label{w1}
w(x;t,\al) := {x^\alpha }{\mathrm e^{ - x-\frac{t}{x}}}, \qquad x\in\mathbb{R}^{+},
\end{equation}
with parameters $\alpha > -1,\; t \ge  0$. (Actually, Chen and Its \cite{CI2010} only considered the $\al>0$ case.) The Hankel determinant can be interpreted as the moment generating function of the Wigner delay time \cite{MS,TM}.

The matrix model and Hankel determinant associated with the weight (\ref{w1}) have played an important role in the study of statistics for zeros of the Riemann zeta function \cite{BS} and the bosonic replica field theories \cite{OK}. Orthogonal polynomials and Hankel determinants for singularly perturbed Gaussian and Jacobi weights have also been extensively studied in the past few years; see, e.g., \cite{BMM,ChenDai,MM,MC2020,MLC}. Matrix models with higher-order poles have been investigated in \cite{Atkin,DXZ2018,DXZ2019}.

We would like to mention that orthogonal polynomials and Hankel determinants for the deformed measure $\mathrm e^{ \frac{t}{x}}d\mu(x)$ have a very close relationship with the peakon solutions of the Camassa-Holm equation \cite{Beals1999,Beals2000,Beals2001}. For more recent work in this respect, see \cite{CC2024,CCH2014,CHS2016}. Orthogonal polynomials and Hankel determinants for the singularly perturbed Laguerre weight (\ref{w1}) have been used to study the long-time asymptotics of the peakon solutions of the Camassa-Holm equation very recently by Chang, Eckhardt and Kostenko \cite[Section 6]{Chang}.

The main results of \cite{CI2010} are that the diagonal recurrence coefficient of the monic orthogonal polynomials satisfies a particular third Painlev\'{e} equation and the logarithmic derivative of the Hankel determinant satisfies the Jimbo-Miwa-Okamoto $\s$-form of the third Painlev\'{e} equation under suitable transformations. The results are derived by using the ladder operator approach and the Riemann-Hilbert approach independently.

Based on the Deift-Zhou steepest descent method for Riemann-Hilbert problems, Xu, Dai and Zhao \cite{Xu1} studied the scaled eigenvalue correlation kernel for the singularly perturbed LUE associated with the weight (\ref{w1}) and found that the limit is a new kernel called $\Psi$-kernel, which has the transition to the Bessel kernel and Airy kernel for small and large parameters, respectively. Subsequently, Xu, Dai and Zhao \cite{Xu2} studied the large $n$ asymptotics of the Hankel determinant together with the leading coefficient of the corresponding orthonormal polynomials and the recurrence coefficients for the monic orthogonal polynomials.

Classical orthogonal polynomials (such as Hermite, Laguerre and Jacobi polynomials) are orthogonal with respect to a weight $w(x)$ on the real line which satisfies the Pearson equation
\begin{equation}\label{r}
 \frac{d}{{dx}}(\sigma (x)w(x)) = \tau (x)w(x),
\end{equation}
where $\sigma (x)$ and $\tau (x)$ are polynomials with deg $\sigma \le 2$ and deg $\tau$=1. For semi-classical orthogonal polynomials, they have a weight $w(x)$ that satisfies the Pearson equation (\ref{r}), where $\sigma (x)$ and $\tau (x)$ are polynomials with deg $\sigma > 2$ or deg $\tau\neq 1$. See, e.g., \cite[Section 1.1.1]{W}.
It is not difficult  to find that (\ref{w1}) is a semi-classical weight, since it satisfies the Pearson equation (\ref{r}) with
$$
\sigma (x) = {x^2}, \qquad \tau (x) =  - {x^2} + (2 +  \alpha)x + t.
$$

Let $P_n(x; t,\al), n=0, 1, 2,\ldots$ be the monic polynomials of degree $n$ orthogonal with respect to the weight (\ref{w1}) and $P_n(x; t,\al)$ has the expansion
\be\label{ex}
P_n(x; t,\al)=x^n+\mathrm{p}(n, t,\al) x^{n-1}+\cdots+P_n(0; t,\al),
\ee
where $\mathrm{p}(n, t,\al)$ is the sub-leading coefficient of $P_n(x; t,\al)$
with the initial value $\mathrm{p}(0, t,\al)=0$. The orthogonality condition reads
\begin{equation}\label{h_j}
\int_{0}^{\infty}P_j(x;t,\al)P_k(x;t,\al)w(x;t,\al)dx= h_j(t,\al)\delta_{jk},\qquad j, k=0,1,2,\ldots,
\end{equation}
where $h_j(t,\al)>0$ and $\delta_{jk}$ is the Kronecker delta. In this paper, we call $h_j(t,\al)$ the normalized constant following Fokas, Its and Kitaev's terminology \cite[p. 396]{Fokas2}.

The orthogonal polynomials satisfy the three-term recurrence relation of the form
\begin{equation}\label{xp}
xP_{n}(x;t,\al)=P_{n+1}(x;t,\al)+\al_n(t,\al) P_n(x;t,\al)+\beta_{n}(t,\al)P_{n-1}(x;t,\al),
\end{equation}
with initial conditions $P_0(x;t,\al)=1,\; \beta_0(t,\al) P_{-1}(x;t,\al)=0$. It follows that the recurrence coefficients $\al_n(t,\al)$ and $\beta_{n}(t,\al)$ have the integral representations
\begin{align}
&\al_n(t,\al)=\frac{1}{h_n(t,\al)}\int_{0}^{\infty}xP_{n}^2(x;t,\al)w(x;t,\al)dx,\no\\[5pt]
&\bt_n(t,\al)=\frac{1}{h_{n-1}(t,\al)}\int_{0}^{\infty}xP_{n}(x;t,\al)P_{n-1}(x;t,\al)w(x;t,\al)dx.\no
\end{align}
Please note that we use the notation $\al_n, \bt_n$ for the recurrence coefficients (since this is standard), which should not cause confusion with the Laguerre weight parameter $\al$ as the latter is never subscripted with $n$.

From (\ref{ex}), (\ref{h_j}) and (\ref{xp}), the recurrence coefficients also have the following expressions:
\begin{align}\label{alpha1}
&\alpha_{n}(t,\al)=\mathrm{p}(n,t,\al)-\mathrm{p}(n+1,t,\al),\\[5pt]
&\beta_{n}(t,\al)=\frac{h_{n}(t,\al)}{h_{n-1}(t,\al)}.\label{be}
\end{align}
In view of (\ref{alpha1}), we get an important identity
\be\label{pn}
\sum_{j=0}^{n-1}\al_j(t,\al)=-\mathrm{p}(n,t,\al).
\ee
Moreover, using the three-term recurrence relation (\ref{xp}) and with the aid of (\ref{be}), we have the famous Christoffel-Darboux formula
$$
\sum_{j=0}^{n-1}\frac{P_j(x;t,\al)P_j(y;t,\al)}{h_j(t,\al)}=\frac{P_n(x;t,\al)P_{n-1}(y;t,\al)-P_n(y;t,\al)P_{n-1}(x;t,\al)}{h_{n-1}(t,\al)(x-y)},\qquad x\neq y.
$$
When $x=y$, the above formula becomes
$$
\sum_{j=0}^{n-1}\frac{P_j^2(x;t,\al)}{h_j(t,\al)}=\frac{P_n'(x;t,\al)P_{n-1}(x;t,\al)-P_n(x;t,\al)P_{n-1}'(x;t,\al)}{h_{n-1}(t,\al)},
$$
where $'$ denotes $\frac{d}{dx}$.

It is also known that the orthogonal polynomials can be expressed as the determinant
\be\label{pnd}
P_n(x;t,\al)=\frac{1}{D_n(t,\al)}\begin{vmatrix}
\mu_{0}(t,\al)&\mu_{1}(t,\al)&\cdots&\mu_{n}(t,\al)\\
\mu_{1}(t,\al)&\mu_{2}(t,\al)&\cdots&\mu_{n+1}(t,\al)\\
\vdots&\vdots&&\vdots\\
\mu_{n-1}(t,\al)&\mu_{n}(t,\al)&\cdots&\mu_{2n-1}(t,\al)\\
1&x&\cdots&x^n
\end{vmatrix}
\ee
and the multiple integral
\be\label{pnmi}
P_n(x;t,\al)=\frac{1}{n! D_n(t,\al)}\int_{(0,\infty)^{n}}\prod_{1\leq i<j\leq n}(x_j-x_i)^2\prod_{k=1}^n (x-x_k)w(x_k;t,\al) dx_k,
\ee
where $D_n(t,\al)$ is the Hankel determinant for the weight (\ref{w1}) defined by
$$
{D_n}(t,\al): = \det (\mu _{i + j}(t,\al))_{i,j = 0}^{n - 1} = \left| \begin{array}{cccc}
\mu _0(t,\al)&\mu _1(t,\al)&\cdots&\mu _{n - 1}(t,\al)\\
\mu _1(t,\al)&\mu _2(t,\al)&\cdots&\mu _n(t,\al)\\
 \vdots & \vdots &{}& \vdots \\
\mu _{n - 1}(t,\al)&\mu _n(t,\al)&\cdots&\mu _{2n - 2}(t,\al)
\end{array} \right|
$$
and $\mu_j(t,\al)$ is the $j$th moment given by
$$
\mu_j(t,\al):=\int_{0}^{\infty}x^j w(x;t,\al)dx,\qquad j=0,1,2,\ldots.
$$
We set $D_0(t,\al)=1$. An evaluation of the above integral gives
$$
\mu_j(t,\al)=2 t^{\frac{1}{2} (j+\alpha+1)} K_{j+\alpha +1}\big(2 \sqrt{t}\big),
$$
where $K_{\nu}(z)$ is the modified Bessel function of the second kind of order $\nu$ and it has the integral representation \cite[Equation (10.32.10)]{Olver}
$$
K_{\nu}(z)=\frac{1}{2}\left(\frac{z}{2}\right)^{\nu}\int_{0}^{\infty}\exp\left(-x-\frac{z^2}{4x}\right)\frac{dx}{x^{\nu+1}}.
$$

In view of (\ref{alpha1}) and (\ref{be}), and with the aid of (\ref{pnd}), the recurrence coefficients $\al_n(t,\al)$ and $\bt_n(t,\al)$ can be expressed as determinants of the forms
\be\label{ad}
\al_n(t,\al)=\frac{{\widetilde{D}_{n+1}}(t,\al)}{{D_{n+1}}(t,\al)}-\frac{{\widetilde{D}_n}(t,\al)}{{D_n}(t,\al)}
\ee
and
\begin{equation}\label{bd}
\bt_n(t,\al)=\frac{D_{n+1}(t,\al) D_{n-1}(t,\al)}{D_n^2(t,\al)},
\end{equation}
where ${\widetilde{D}_n}(t,\al)$ is obtained by replacing the last column $\left(\mu_{n - 1}(t,\al), \mu_{n}(t,\al), \ldots, \mu_{2n-2}(t,\al)\right)^{T}$ of ${D_n}(t,\al)$ by $\left(\mu_{n}(t,\al), \mu_{n+1}(t,\al), \ldots, \mu_{2n-1}(t,\al)\right)^{T}$. See, e.g., \cite[p. 3]{W}.

It is a well-known fact that the Hankel determinant $D_n(t,\al)$ has two alternative representations: the first as a product of the constants $h_j$'s for the monic orthogonal polynomials
\begin{equation}\label{d_nt1}
\begin{aligned}
{D_n}(t,\al)=&\prod\limits_{j = 0}^{n - 1} {{h_j}(t,\al)}
\end{aligned}
\end{equation}
and the second as a multiple integral
\be\label{mi}
D_n(t,\al)=\frac{1}{n!} \int_{(0, \infty)^n} \prod_{1 \leq i<j \leq n}(x_j-x_i)^2 \prod_{k=1}^n w(x_k;t,\al) d x_k.
\ee
The combination of (\ref{pnmi}) and (\ref{mi}) shows that
\be\label{pn0}
(-1)^nP_n(0;t,\al)=\frac{D_n(t,\al+1)}{D_n(t,\al)}.
\ee
For more information about orthogonal polynomials, see, for example, \cite{Chihara,Ismail,G}.

The rest of the paper is organized as follows. In Section 2, by applying the ladder operators and compatibility conditions for the generalized Laguerre-type weight to our problem, we show that the orthogonal polynomials satisfy a second-order differential equation, and derive a discrete system and a Toda-type system for the recurrence coefficients. We also present the relations among the logarithmic derivative of the Hankel determinant, the sub-leading coefficient of the monic orthogonal polynomials, the recurrence coefficients and the auxiliary quantities. In Section 3, we obtain a mixed recurrence relation for the orthogonal polynomials and then study some properties of the zeros of the orthogonal polynomials. In Section 4, we investigate the large $n$ asymptotics of the recurrence coefficients $\al_n(t,\al)$ and $\bt_n(t,\al)$, the sub-leading coefficient $\mathrm{p}(n, t,\al)$, the Hankel determinant ${D_n}(t,\al)$ and the normalized constant ${{h_n}(t,\al)}$ for fixed $t > 0$. We discuss the long-time ($t\rightarrow+\infty$) asymptotics of these quantities explicitly for fixed $n\in\mathbb{N}$ in Section 5, and we present our concluding remarks in Section 6.

\section{Ladder operators, difference and differential equations}
The ladder operator method due to Chen and Ismail \cite{Chen1997} is a very useful technique frequently employed for analyzing the recurrence coefficients of orthogonal polynomials as well as the related Hankel determinants. Recently, the first author and Fang \cite{MF} derived the ladder operators and compatibility conditions for orthogonal polynomials with the generalized Laguerre-type weight of the form
\be\label{gf}
w(x) = {x^\al }w_0(x),\qquad\qquad x\in\mathbb{R}^{+},\quad \al > -1,
\ee
where $w_0(x)$ is continuously differentiable on $[0,\infty)$ and all the moments of $w(x)$ exist. (Chen and Ismail's ladder operator method requires $\al>0$.)

For our problem, the monic orthogonal polynomials $P_n(x)$ satisfy the ladder operator equations
\begin{equation}\label{bnn1}
\left(\frac{d}{dx}+B_{n}(x)\right)P_{n}(x)=\beta_{n}A_{n}(x)P_{n-1}(x),
\end{equation}
\begin{equation}\label{bn2}
\left(\frac{d}{dx}-B_{n}(x)-\mathrm{v}'(x)\right)P_{n-1}(x)=-A_{n-1}(x)P_{n}(x),
\end{equation}
where $\mathrm{v}(x):=-\ln w(x)$ is the potential and
\begin{equation}\label{an}
A_{n}(x):=\frac{1}{x}\cdot\frac{1}{h_{n}}\int_{0}^{\infty}\frac{x \mathrm{v}'(x)-y \mathrm{v}'(y)}{x-y}P_{n}^{2}(y)w(y)dy,
\end{equation}
\begin{equation}\label{bn}
B_{n}(x):=\frac{1}{x} \left( \dfrac{1}{h_{n-1}}\int_{0}^{\infty}\dfrac{x \mathrm{v}'(x)-y \mathrm{v}'(y)}{x-y}P_{n}(y)P_{n-1}(y)w(y)dy-n\right).
\end{equation}
We often do not display the $t$ or $\al$-dependence of many quantities for brevity. For example, we may write the recurrence coefficients $\al_n(t,\al)$ and $\bt_n(t,\al)$ as $\al_n(t)$ and $\bt_n(t)$, or even simply as $\al_n$ and $\bt_n$.
The functions $A_n(x)$ and $B_n(x)$ defined above are not independent but satisfy the following compatibility conditions \cite{MF}:
\be
B_{n+1}(x)+B_{n}(x)=(x-\alpha_n) A_{n}(x)-\mathrm{v}'(x), \tag{$S_{1}$}
\ee
\be
1+(x-\alpha_n)(B_{n+1}(x)-B_{n}(x))=\beta_{n+1}A_{n+1}(x)-\beta_{n}A_{n-1}(x), \tag{$S_{2}$}
\ee
\be
B_{n}^{2}(x)+\mathrm{v}'(x)B_{n}(x)+\sum_{j=0}^{n-1}A_{j}(x)=\beta_{n}A_{n}(x)A_{n-1}(x),\tag{$S_{2}'$}
\ee
where ($S_{2}'$) is obtained from a suitable combination of ($S_{1}$) and ($S_{2}$). Next, we will use the ladder operators and associated compatibility conditions to analyze our problem.

From (\ref{w1}) we see that
\begin{equation}\label{pt1}
\mathrm v(x) =  - \ln w(x) = x +\dfrac{t}{x}-\alpha \ln x
\end{equation}
and
\begin{equation}\label{vp1}
\frac{x \mathrm v^\prime(x) - y \mathrm v^\prime(y)}{x - y} = 1+ \frac{t }{x y}.
\end{equation}
Substituting (\ref{vp1}) into (\ref{an}) and (\ref{bn}), we find
$$
\begin{aligned}
A_{n}(x)&=\dfrac{1}{x}\cdot\dfrac{1}{h_{n}}\int_{0}^{\infty}\left(1+\dfrac{t }{xy}\right)P_{n}^{2}(y)w(y)d y\\[8pt]
& =\dfrac{1}{x}+\dfrac{t}{x^2 h_{n}}\int_{0}^{\infty}\dfrac{P_{n}^{2}(y)}{y }w(y)d y,
\end{aligned}
$$
$$
\begin{aligned}
B_{n}(x)&=\dfrac{1}{x}\left(\dfrac{1}{h_{n-1}}\int_0^\infty{\left(1+\dfrac{t }{xy}\right)P_{n}(y)P_{n-1}(y) w(y)}d y -n\right)\\[8pt]
&=-\dfrac{n}{x}+ \dfrac{t }{{x^2 h_{n - 1}}}\int_0^\infty  {\dfrac{{{P_n}(y){P_{n - 1}}(y)}}{{y }}} w(y)d y.
\end{aligned}
$$
Hence, we have the following lemma.
\begin{lemma}
The functions ${A_n}(x)$ and ${B_n}(x)$ can be expressed as follows:
\begin{equation}\label{anz1}
{A_n}(x) =
\frac{1}{x} +\frac{{R_n}(t)}{x^2},
\end{equation}
\begin{equation}\label{bnz1}
{B_n}(x) = -\dfrac{n}{x} + \dfrac{{r_n}(t)}{x^2},
\end{equation}
where ${R_n}(t)$ and ${r_n}(t)$ are two auxiliary quantities given by
\begin{equation}\label{Rnz1}
{R_n}(t): = \frac{t}{{h_n}}\int_0^\infty  {\frac{{P^2_n}(y)}{y}}w(y)d y,
\end{equation}
\begin{equation}\label{rnz1}
{r_n}(t): = \frac{t}{{h_{n - 1}}}\int_0^\infty  {\frac{{{P_n}(y){P_{n - 1}}(y)}}{{y}}}w(y)d y.
\end{equation}
\end{lemma}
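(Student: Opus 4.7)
The plan is to obtain the two expressions by direct substitution into the defining integrals (\ref{an}) and (\ref{bn}), using the explicit form of the potential $\mathrm{v}(x)$ from (\ref{pt1}) together with the orthogonality of $\{P_n\}$.

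First I would compute the difference quotient $(x\mathrm{v}'(x)-y\mathrm{v}'(y))/(x-y)$. From $\mathrm{v}(x)=x+t/x-\alpha\ln x$ one has $x\mathrm{v}'(x)=x-t/x-\alpha$, so the $\alpha$-term and the constant cancel in the numerator, leaving
$$
x\mathrm{v}'(x)-y\mathrm{v}'(y)=(x-y)+\frac{t(x-y)}{xy},
$$
which upon division by $x-y$ gives formula (\ref{vp1}). This is the key simplification: the singular Laguerre weight produces a rational kernel with only two terms.

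Next, I would plug (\ref{vp1}) into the definition (\ref{an}) of $A_n(x)$ and split the integral into two pieces. The constant term yields $\frac{1}{x h_n}\int_0^\infty P_n^2(y)w(y)\,dy=\frac{1}{x}$ by the normalization $\int P_n^2 w=h_n$ in (\ref{h_j}); the $t/(xy)$ term produces $\frac{t}{x^2 h_n}\int_0^\infty P_n^2(y) y^{-1}w(y)\,dy$, which I would abbreviate by the auxiliary quantity $R_n(t)$ defined in (\ref{Rnz1}), giving (\ref{anz1}). Analogously, substituting into (\ref{bn}), the constant term gives $\frac{1}{x h_{n-1}}\int_0^\infty P_n(y)P_{n-1}(y)w(y)\,dy=0$ by orthogonality, so only the $-n/x$ term survives from the prefactor, and the $t/(xy)$ term produces the quantity $r_n(t)$ in (\ref{rnz1}), yielding (\ref{bnz1}).

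There is no real obstacle here: the derivation reduces to arithmetic with the difference quotient and two applications of the orthogonality relation (\ref{h_j}). The only minor point worth highlighting is that the integrals defining $R_n(t)$ and $r_n(t)$ are manifestly convergent for $t\ge 0$ and $\alpha>-1$ because the factor $\mathrm{e}^{-t/y}$ (or, when $t=0$, the factor $y^{\alpha}$ with $\alpha>-1$) controls the behavior near $y=0$ once one divides by $y$; this justifies the use of the generalized ladder-operator framework from \cite{MF} in the full range $\alpha>-1$, rather than only $\alpha>0$.
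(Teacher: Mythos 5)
Your proposal is correct and follows essentially the same route as the paper: compute $x\mathrm{v}'(x)-y\mathrm{v}'(y)$ to get the rational kernel $1+\frac{t}{xy}$, substitute into the defining integrals (\ref{an}) and (\ref{bn}), and apply the normalization and orthogonality relations from (\ref{h_j}) to identify the $\frac{1}{x}$, $-\frac{n}{x}$ and $\frac{1}{x^2}$ terms. Your closing remark on convergence of the integrals near $y=0$ for $\alpha>-1$, $t\geq 0$ is a sensible addition that the paper leaves implicit.
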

\begin{remark}
The expressions of ${A_n}(x)$ and ${B_n}(x)$ in the above lemma are coincident with \cite[Lemma 2]{CI2010}. It can be seen that our derivation is more straightforward and the results are valid for $\al>-1,\; t\geq 0$.
\end{remark}

Substituting (\ref{anz1}) and (\ref{bnz1}) into ($S_{1}$) and ($S_{2}'$), we obtain the following identities satisfied by the recurrence coefficients and the auxiliary quantities (see also \cite{CI2010}):
\begin{equation}\label{s11}
{R_n}(t)- {\alpha _n}(t)+2 n+1+\alpha=0,
\end{equation}
\begin{equation}\label{s12}
{r_n}(t) + {r_{n + 1}}(t) = t  - {\alpha _n}(t){R_n}(t),
\end{equation}
\begin{equation}\label{s21}
{r^2_n}{(t)} - t{r_n}(t) = {\beta _n}(t){R_n}(t){R_{n - 1}}(t),
\end{equation}
\begin{equation}\label{s23}
n t -(2n+\alpha) {r_n}(t) = {\beta _n}(t)({R_n}(t)+ {R_{n - 1}}(t)),
\end{equation}
\begin{equation}\label{s22a}
n(n+\alpha)+{r_n}(t)+ \sum\limits_{j = 0}^{n - 1} {{R_j}(t)}={\beta _n}(t).
\end{equation}
\begin{lemma}
The auxiliary quantities $R_n(t)$ and $r_n(t)$ are expressed in terms of the recurrence coefficients $\alpha_n(t)$ and $\beta_n(t)$ as follows:
\begin{equation}\label{alpha2}
 {R_n}(t)={\alpha _n}(t)-2 n-1 - \alpha,
\end{equation}
\begin{equation}\label{s25}
(2 n+\alpha  ){r_n}(t) = nt-\beta _n(t)({\alpha _n}(t)+{\alpha _{n-1}}(t)-4n-2\alpha).
\end{equation}
\end{lemma}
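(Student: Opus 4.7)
The plan is to observe that both identities are immediate algebraic consequences of the system (\ref{s11})–(\ref{s23}) that has just been established from the compatibility conditions $(S_1)$ and $(S_2')$; no further input from the weight is needed.

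First, the identity (\ref{alpha2}) is nothing more than (\ref{s11}) solved for $R_n(t)$:
\[
R_n(t)-\alpha_n(t)+2n+1+\alpha=0 \quad\Longleftrightarrow\quad R_n(t)=\alpha_n(t)-2n-1-\alpha.
\]
So the first claim requires only rearrangement.

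For the second identity (\ref{s25}), the natural starting point is (\ref{s23}), which already expresses $(2n+\alpha)r_n(t)$ linearly in terms of $R_n(t)+R_{n-1}(t)$:
\[
(2n+\alpha)\,r_n(t)=nt-\beta_n(t)\bigl(R_n(t)+R_{n-1}(t)\bigr).
\]
Now I would substitute (\ref{alpha2}) at indices $n$ and $n-1$, noting that
\[
R_{n-1}(t)=\alpha_{n-1}(t)-2(n-1)-1-\alpha=\alpha_{n-1}(t)-2n+1-\alpha,
\]
so that $R_n(t)+R_{n-1}(t)=\alpha_n(t)+\alpha_{n-1}(t)-4n-2\alpha$. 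Inserting this into the displayed equation above yields (\ref{s25}) directly.

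There is really no obstacle here: the lemma is a corollary rather than a theorem, and the only content is the two substitutions sketched above. The actual work was already done in passing from $(S_1)$ and $(S_2')$ to the system (\ref{s11})–(\ref{s22a}); I would present the derivation in two short lines, one per identity, and move on. The value of isolating this lemma is organizational, namely to make explicit that $R_n(t)$ and $r_n(t)$ can be eliminated in favor of $\alpha_n(t)$ and $\beta_n(t)$, which is what subsequent sections will exploit when turning the compatibility system into a closed discrete/Toda-type system for the recurrence coefficients.
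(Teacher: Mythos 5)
Your proposal is correct and follows exactly the paper's own argument: (\ref{alpha2}) is just (\ref{s11}) rearranged, and (\ref{s25}) follows by substituting (\ref{alpha2}) at indices $n$ and $n-1$ into (\ref{s23}), with the index shift $R_{n-1}(t)=\alpha_{n-1}(t)-2n+1-\alpha$ handled correctly. Nothing is missing.
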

\begin{proof}
From (\ref{s11}) we get (\ref{alpha2}). Substituting (\ref{alpha2}) into (\ref{s23}) to eliminate $R_n(t)$ and $R_{n-1}(t)$, we obtain (\ref{s25}).
\end{proof}
\begin{theorem}
The orthogonal polynomials $P_n(x)$ satisfy the second-order differential equation
\begin{align}\label{p4}
&{P''_n}(x) - \bigg(\mathrm v^\prime(x)  + \dfrac{{A_n'(x)}}{{{A_n}(x)}}\bigg)P_n'(x) + \bigg( B_n'(x) - B_n^2(x) - \mathrm v^\prime(x) {B_n}(x)+ {\beta _n}{A_n}(x){A_{n - 1}}(x)\no\\[5pt]
&-\dfrac{{A_n'(x){B_n}(x)}}{{{A_n}(x)}}\bigg) {P_n}(x) = 0,
\end{align}
where $\mathrm v^\prime(x)=1-\frac{\al}{x}-\frac{t}{x^2}$ and
\begin{equation}\label{A1}
A_n(x)=\frac{1}{x}+\frac{{\alpha _n}(t)-2 n-1 - \alpha}{x^2},
\end{equation}
\begin{equation}\label{B1}
B_n(x)=- \dfrac{n}{x}+\frac{nt-\beta _n(t)({\alpha _n}(t)+{\alpha _{n-1}}(t)-4n-2\alpha)}{(2 n+\alpha)x^2}.
\end{equation}
\end{theorem}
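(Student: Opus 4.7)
The plan is to derive the differential equation (\ref{p4}) by eliminating $P_{n-1}(x)$ and $P_{n-1}'(x)$ between the two ladder operator equations (\ref{bnn1}) and (\ref{bn2}); no compatibility condition is actually needed to produce (\ref{p4}) itself, since the coefficient of $P_n$ retains the product $\beta_n A_n(x)A_{n-1}(x)$ explicitly. The explicit formulas (\ref{A1}) and (\ref{B1}) then follow by inserting the expressions for the auxiliary quantities obtained in Lemma 2.2.

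First, I would solve the lowering relation (\ref{bnn1}) for $P_{n-1}(x)$, obtaining
\begin{equation*}
P_{n-1}(x) = \frac{P_n'(x) + B_n(x) P_n(x)}{\beta_n A_n(x)},
\end{equation*}
and then differentiate this identity once, using the quotient rule, to express $P_{n-1}'(x)$ in terms of $P_n''(x)$, $P_n'(x)$, $P_n(x)$, $A_n(x)$, $A_n'(x)$, $B_n(x)$ and $B_n'(x)$. A factor $A_n'(x)/A_n(x)$ will naturally appear from differentiating the denominator, which is the source of the corresponding term in (\ref{p4}).

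Second, I would substitute these two expressions for $P_{n-1}(x)$ and $P_{n-1}'(x)$ into the raising relation (\ref{bn2}) and clear the common denominator by multiplying through by $\beta_n A_n(x)$. Grouping the result by powers of derivatives of $P_n$, the coefficient of $P_n'(x)$ collapses to $-(\mathrm v'(x) + A_n'(x)/A_n(x))$ after cancellation of the $B_n(x)$ terms, while the coefficient of $P_n(x)$ assembles into $B_n'(x) - B_n^2(x) - \mathrm v'(x)B_n(x) - A_n'(x)B_n(x)/A_n(x) + \beta_n A_n(x)A_{n-1}(x)$; this is exactly (\ref{p4}).

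Third, the explicit forms (\ref{A1}) and (\ref{B1}) follow by combining the general expressions (\ref{anz1}) and (\ref{bnz1}) with the substitutions $R_n(t) = \alpha_n(t) - 2n - 1 - \alpha$ from (\ref{alpha2}) and $(2n+\alpha)r_n(t) = nt - \beta_n(t)(\alpha_n(t) + \alpha_{n-1}(t) - 4n - 2\alpha)$ from (\ref{s25}). The form of $\mathrm v'(x)$ is obtained by differentiating (\ref{pt1}). The only real work is careful algebraic bookkeeping when collecting coefficients in step two; there is no analytic obstacle, since everything reduces to a direct elimination between two first-order relations for $P_n$ and $P_{n-1}$.
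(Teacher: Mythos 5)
Your proposal is correct and follows essentially the same route as the paper: eliminate $P_{n-1}(x)$ between the two ladder operator equations to obtain (\ref{p4}), then insert (\ref{alpha2}) and (\ref{s25}) into (\ref{anz1}) and (\ref{bnz1}) to get the explicit forms (\ref{A1}) and (\ref{B1}). The intermediate bookkeeping you describe (the quotient-rule origin of the $A_n'/A_n$ terms and the cancellation in the coefficient of $P_n'$) checks out.
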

\begin{proof}
Eliminating $P_{n-1}(x)$ from the ladder operator equations (\ref{bnn1}) and (\ref{bn2}), we obtain  (\ref{p4}). Inserting (\ref{alpha2}) and (\ref{s25}) into (\ref{anz1}) and (\ref{bnz1}), we get the results in  (\ref{A1}) and (\ref{B1}).
\end{proof}
\begin{remark}
In the case $t=0$, it can be seen from (\ref{Rnz1}) and (\ref{rnz1}) that $R_n(0)=0,\; r_n(0)=0$. By making use of (\ref{s11}) and (\ref{s22a}), we have $\al_n(0)=2n+1+\al,\;\bt_n(0)=n(n+\al)$. Then, the differential equation (\ref{p4}) is reduced to the differential equation satisfied by classical Laguerre polynomials \cite{G}
$$
xP_n''(x)+(\al+1-x)P_n'(x)+nP_n(x)=0.
$$
\end{remark}
\begin{theorem}\label{th1}
The recurrence coefficients $\alpha_n$ and $\bt_n$ satisfy a \textbf{first-order} system of difference equations as follows:
\begin{subequations}\label{ds1}
\begin{align}\label{d11}
&\al t+2\bt_n(\al_n+\al_{n-1})+(2n+\al)\left[\al_n(2n+2+\al-\al_n)-3\bt_n-\bt_{n+1}\right]=0,\\[8pt]
&\left[nt-\beta_n(\alpha_n+\alpha_{n-1}-4n-2\alpha)\right]\left[(n+\al)t+\beta_n(\alpha_n+\alpha_{n-1}-4n-2\alpha)\right]\no\\
&+(2n+\alpha)^2\beta_n(\alpha_n-2n-1-\alpha)(\alpha_{n-1}-2n+1-\alpha)=0.\label{d12}
\end{align}
\end{subequations}
\end{theorem}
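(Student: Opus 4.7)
The proof is a purely algebraic consequence of the compatibility identities (\ref{s11})--(\ref{s22a}) combined with the explicit formulas (\ref{alpha2}) and (\ref{s25}) from the preceding lemma --- no further analytic input is needed. My plan is to establish (\ref{d12}) first, since it follows in essentially one step from (\ref{s21}), and then to derive (\ref{d11}) by combining a shifted difference of (\ref{s22a}) with the sum rule (\ref{s12}) and eliminating $r_n$ via (\ref{s25}).

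For (\ref{d12}), I would note that (\ref{alpha2}) at indices $n$ and $n-1$ gives $\alpha_n - 2n - 1 - \alpha = R_n$ and $\alpha_{n-1} - 2n + 1 - \alpha = R_{n-1}$, so the last term on the left of (\ref{d12}) is $(2n+\alpha)^2 \beta_n R_n R_{n-1}$; by (\ref{s21}) this equals $(2n+\alpha)^2(r_n^2 - t r_n) = (2n+\alpha) r_n \cdot (2n+\alpha)(r_n - t)$. Setting $U := \beta_n(\alpha_n + \alpha_{n-1} - 4n - 2\alpha)$, the identity (\ref{s25}) reads $(2n+\alpha) r_n = nt - U$, hence $(2n+\alpha)(r_n - t) = -[(n+\alpha)t + U]$, and substituting these factorizations reproduces (\ref{d12}) directly.

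For (\ref{d11}), I would shift (\ref{s22a}) up by one and subtract, which yields $\beta_{n+1} - \beta_n = (2n+1+\alpha) + R_n + r_{n+1} - r_n$; using (\ref{alpha2}) this simplifies to $r_{n+1} - r_n = \beta_{n+1} - \beta_n - \alpha_n$. Subtracting this from (\ref{s12}) --- after rewriting the right-hand side of (\ref{s12}) as $t + \alpha_n(2n+1+\alpha - \alpha_n)$ via (\ref{alpha2}) --- produces
$$2 r_n = t + \alpha_n(2n + 2 + \alpha - \alpha_n) + \beta_n - \beta_{n+1}.$$
Multiplying through by $2n+\alpha$ and replacing the left-hand side via (\ref{s25}) then eliminates $r_n$ entirely, and a routine rearrangement yields (\ref{d11}).

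The only real obstacle is bookkeeping: one has to recognize the collapse $-2\beta_n(4n+2\alpha) + (2n+\alpha)\beta_n = -3(2n+\alpha)\beta_n$, which converts the constant piece $-4n-2\alpha$ carried by (\ref{s25}) into the $3\beta_n$ summand in (\ref{d11}), while the residual $2nt - (2n+\alpha)t$ furnishes the $\alpha t$ term. Once this reshuffling is noted, both identities come out by direct substitution.
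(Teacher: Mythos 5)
Your proposal is correct and follows essentially the same route as the paper: equation (\ref{d12}) is obtained by multiplying (\ref{s21}) by $(2n+\alpha)^2$ and substituting (\ref{alpha2}) and (\ref{s25}), while (\ref{d11}) comes from differencing (\ref{s22a}), simplifying via (\ref{s11})/(\ref{alpha2}) to get $r_{n+1}-r_n=\beta_{n+1}-\beta_n-\alpha_n$, combining with (\ref{s12}) to isolate $2r_n$, and then eliminating $r_n$ with (\ref{s25}). Your explicit factorization $(2n+\alpha)^2(r_n^2-tr_n)=(nt-U)\bigl(-[(n+\alpha)t+U]\bigr)$ and the bookkeeping identity $-2(4n+2\alpha)\beta_n+(2n+\alpha)\beta_n=-3(2n+\alpha)\beta_n$ are just more detailed renderings of the paper's "after simplification" steps.
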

\begin{proof}
To derive the first equation (\ref{d11}), we start from replacing $n$ by $n+1$ in (\ref{s22a}) and making a difference with (\ref{s22a}), which gives
$$
r_{n+1}-r_n+R_n+2n+1+\al=\bt_{n+1}-\bt_n.
$$
Using (\ref{s11}), it follows that
\be\label{de}
r_{n+1}-r_n+\al_n=\bt_{n+1}-\bt_n.
\ee
Eliminating $r_{n+1}$ from (\ref{s12}) and (\ref{de}), we have
$$
2r_n-\al_n=t-\al_nR_n+\bt_n-\bt_{n+1}.
$$
Multiplying both sides of the above equation by $2n+\alpha$, and substituting (\ref{alpha2}) and (\ref{s25}) into it, we obtain (\ref{d11}) after simplification.
Multiplying both sides of equation (\ref{s21}) by $(2n+\alpha)^2$ and using (\ref{alpha2}) and (\ref{s25}), we arrive at (\ref{d12}).
\end{proof}
\begin{remark}
One will get a \textbf{second-order} difference equation for the recurrence coefficients by substituting (\ref{alpha2}) and (\ref{s25}) into (\ref{s12}) directly. The discrete system (\ref{ds1}) is very important and will be used to derive the large $n$ asymptotic expansions of the recurrence coefficients in Section 4.
\end{remark}
\begin{remark}
We mention that Chen and Its \cite{CI2010} derived a system of difference equations for the auxiliary quantities $R_n$ and $r_n$ from (\ref{s11})--(\ref{s23}):
\begin{subequations}\label{cid}
\begin{align}
&r_n+ r_{n+1}=t-(2n+1+\al+R_n)R_n,\\
&(r_n^2-tr_n)(R_n+R_{n-1})=[nt-(2n+\al)r_n]R_nR_{n-1}.
\end{align}
\end{subequations}
Van Assche \cite[p. 58--59]{W} showed that the system (\ref{cid}) can be identified as alternate discrete Painlev\'{e} II equations and can be transformed into a discrete Painlev\'{e} equation with symmetry the affine Weyl group $(2A_1)^{(1)}$ and surface $D_6^{(1)}$ from \cite{KNY}. Our discrete system (\ref{ds1}) for the recurrence coefficients is equivalent to the discrete system (\ref{cid}) for the auxiliary quantities via the relations (\ref{s11})--(\ref{s22a}).
\end{remark}

Next, we show that the recurrence coefficients satisfy a Toda-type system.
\begin{theorem}
The recurrence coefficients $\alpha_n(t)$ and $\beta_n(t)$ satisfy a system of differential-difference equations
\begin{subequations}\label{tb}
\begin{align}
&t \alpha'_n(t)=\alpha_n(t)+\beta_{n}(t)-\beta_{n+1}(t),\label{tb2}\\
&t \beta'_n(t)=\beta_n(t)(\alpha_{n-1}(t)-\alpha_n(t)+2).\label{tb1}
\end{align}
\end{subequations}
\end{theorem}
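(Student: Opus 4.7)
The plan is to differentiate the orthogonality relations and the normalization $h_n(t,\al)$ in $t$, exploiting the key identity
$$
\frac{\partial w(x;t,\al)}{\partial t}=-\frac{w(x;t,\al)}{x}
$$
together with the observation that $\partial_t P_n(x)$ is a polynomial in $x$ of degree at most $n-1$ (the leading coefficient of the monic $P_n$ is the $t$-independent constant $1$), so it is orthogonal to $P_k$ for every $k\ge n$. Everything then reduces to the auxiliary quantities $R_n(t)$ and $r_n(t)$ of Lemma~2.1, after which identities (\ref{alpha2}) and (\ref{de}) carry out the remaining algebra.

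For (\ref{tb1}), I first differentiate $h_n(t,\al)=\int_0^\infty P_n^2(x;t,\al)\,w(x;t,\al)\,dx$ with respect to $t$. The cross term $\int 2 P_n\,\partial_t P_n\,w\,dx$ vanishes by orthogonality of $\partial_t P_n$ to $P_n$, and the $\partial_t w$ contribution produces $-R_n(t)\,h_n(t,\al)/t$ by the definition (\ref{Rnz1}). Hence $t\,h_n'/h_n=-R_n$. Since $\bt_n=h_n/h_{n-1}$ by (\ref{be}), subtracting at consecutive indices yields $t\,\bt_n'/\bt_n=R_{n-1}-R_n$. Substituting $R_n=\al_n-2n-1-\al$ from (\ref{alpha2}) gives $R_{n-1}-R_n=\al_{n-1}-\al_n+2$, which is exactly (\ref{tb1}).

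For (\ref{tb2}), I differentiate the orthogonality relation $\int_0^\infty P_n P_{n+1}\,w\,dx=0$ in $t$. The term containing $\partial_t P_n$ drops out since $\deg\partial_t P_n\le n-1<n+1$. From the expansion (\ref{ex}), $\partial_t P_{n+1}\equiv \mathrm{p}'(n+1,t,\al)\,x^n\equiv \mathrm{p}'(n+1,t,\al)\,P_n$ modulo polynomials of degree at most $n-1$, so the middle term equals $\mathrm{p}'(n+1,t,\al)\,h_n$; and the $\partial_t w$ contribution equals $-r_{n+1}(t)\,h_n/t$ by (\ref{rnz1}). These three pieces yield $t\,\mathrm{p}'(n+1,t,\al)=r_{n+1}(t)$, and an index shift gives $t\,\mathrm{p}'(n,t,\al)=r_n(t)$. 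Combined with $\al_n=\mathrm{p}(n,t,\al)-\mathrm{p}(n+1,t,\al)$ from (\ref{alpha1}), this produces $t\al_n'=r_n-r_{n+1}$, and the identity (\ref{de}) established inside the proof of Theorem~\ref{th1}, namely $r_{n+1}-r_n+\al_n=\bt_{n+1}-\bt_n$, rewrites the right-hand side as $\al_n+\bt_n-\bt_{n+1}$, giving (\ref{tb2}).

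The argument is mostly mechanical; the only mild subtlety worth flagging is in the derivation of (\ref{tb2}), where one must correctly identify the degree-$n$ coefficient of $\partial_t P_{n+1}$ as $\mathrm{p}'(n+1,t,\al)$ via (\ref{ex}) so that orthogonality collapses $\int P_n\,\partial_t P_{n+1}\,w\,dx$ to $\mathrm{p}'(n+1,t,\al)\,h_n$ rather than to something that still depends on the lower-degree components of $\partial_t P_{n+1}$.
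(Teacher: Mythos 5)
Your proposal is correct and follows essentially the same route as the paper: differentiating the orthogonality and normalization relations in $t$ to obtain $t\,\mathrm{p}'(n,t)=r_n(t)$ and $t\frac{d}{dt}\ln h_n(t)=-R_n(t)$, then invoking (\ref{alpha1}), (\ref{be}), (\ref{alpha2}) and (\ref{de}) exactly as the authors do. The only difference is that you spell out the degree-counting and orthogonality arguments that the paper leaves implicit, and all of those details check out.
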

\begin{proof}
From (\ref{h_j}) we have
$$
\int_{0}^{\infty}P_n(x;t)P_{n-1}(x;t)w(x;t) dx=0
$$
and
$$
\int_{0}^{\infty}P_n^2(x;t)w(x;t) dx=h_n(t).
$$
By taking derivatives with respect to $t$ on both sides respectively, we obtain
\begin{equation}\label{ddt1}
t\frac{d}{d t}\mathrm{p}(n,t)=r_n(t)
\end{equation}
and
\begin{equation}\label{dlnhnt}
t\frac{d}{d t}\ln h_n(t)=-R_n(t).
\end{equation}
From (\ref{alpha1}) and (\ref{ddt1}) we find
\be\label{equ}
t\alpha_n'(t)=r_n(t)-r_{n+1}(t).
\ee
The combination of (\ref{de}) and (\ref{equ}) gives the result in (\ref{tb2}).
Using (\ref{be}) and (\ref{dlnhnt}) we have
\be\label{bn3}
t \beta_{n}'(t)=\beta_n(t) (R_{n-1}(t)- R_{n}(t)).
\ee
Substituting (\ref{alpha2}) into (\ref{bn3}), we arrive at (\ref{tb1}).
\end{proof}
\begin{remark}
The Toda-type system (\ref{tb}) will play an important role in the derivation of the long-time asymptotics of the recurrence coefficients in Section 5.
\end{remark}

Let $H_n(t)$ be a quantity related to the logarithmic derivative of the Hankel determinant
\begin{equation}\label{Pin}
H_n(t): = t\frac{d}{{dt}}\ln {D_n}(t).
\end{equation}
In the following, we derive some relations between $H_n(t)$ and other quantities including the recurrence coefficients, the sub-leading coefficient and the auxiliary quantities.
By making use of (\ref{d_nt1}) and (\ref{dlnhnt}), we find
\begin{equation}\label{eq4}
H_n(t)=-\sum_{j=0}^{n-1}R_j(t).
\end{equation}
It follows that
\begin{equation}\label{tR}
R_n(t)=H_n(t)-H_{n+1}(t).
\end{equation}
Substituting (\ref{alpha2}) into (\ref{eq4}) and using (\ref{pn}), we obtain
\begin{equation}\label{r12}
H_n(t)=n(n + \alpha) + \mathrm p(n,t).
\end{equation}
On the other hand, from (\ref{s22a}) we have
\begin{equation}\label{r11}
H_n(t)=n(n + \alpha)+r_n(t)-\beta_n(t).
\end{equation}
The combination of  (\ref{r12}) and (\ref{r11}) yields
\begin{equation}\label{pp}
\mathrm{p}(n,t)={r_n}(t)- {\beta _n}(t) .
\end{equation}
The above relations will be used in the asymptotic analysis of our problem in Sections 4 and 5. Finally, we present the main results of Chen and Its \cite{CI2010} in the following theorem.
\begin{theorem}[Chen and Its \cite{CI2010}]\label{thm1}
The auxiliary quantity $R_n(t)$, related to the recurrence coefficient $\al_n(t)$ via (\ref{alpha2}), satisfies the Painlev\'{e} III\,$'$ equation \cite[(A.45.4\,$'$)]{Mehta}
$$
R_n''(t)=\frac{(R_n'(t))^2}{R_n(t)}-\frac{R_n'(t)}{t}+\frac{\tilde{\al} R_n^2(t)+\tilde{\gamma} R_n^3(t)}{4t^2}+\frac{\tilde{\beta}}{4t}+\frac{\tilde{\delta}}{4R_n(t)},
$$
with parameters $\tilde{\al}=4(2n+1+\al),\;\tilde{\gamma}=4,\;\tilde{\beta}=4\al,\;\tilde{\delta}=-4$.
The quantity $H_n(t)=t\frac{d}{{dt}}\ln {D_n}(t)$ satisfies the second-order nonlinear differential equation
\be\label{deh}
\left(tH_n''(t)\right)^2=\left[n-(2n+\al)H_n'(t)\right]^2-4\left[n(n+\al)+tH_n'(t)-H_n(t)\right]H_n'(t)(H_n'(t)-1).
\ee
\end{theorem}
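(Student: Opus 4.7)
The plan is to derive both equations using only the discrete system \eqref{s11}--\eqref{s22a} and the Toda-type system \eqref{tb2}--\eqref{tb1}. The two identities package the same algebraic information differently: the Painlev\'e III$'$ equation isolates $R_n(t)$ by eliminating every $n$-shift through $t$-derivatives, whereas the second-order ODE for $H_n(t)$ exploits the identity $tH_n'(t)=r_n(t)$ to fold the discrete structure directly into a single $t$-derivative.

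For the $\sigma$-form equation, I would begin from \eqref{ddt1} and \eqref{r12}, which yield $tH_n'(t)=r_n(t)$, and \eqref{r11}, which then gives $\beta_n(t)=tH_n'(t)-H_n(t)+n(n+\alpha)$. Differentiating $r_n=tH_n'$ in $t$ and using \eqref{tb1} together with $\beta_n'=r_n'-H_n'$ (a consequence of \eqref{pp} and \eqref{r12}) produces $t^2 H_n''(t)=\beta_n(t)(R_{n-1}(t)-R_n(t))$. The discrete identity \eqref{s23} rewrites as $\beta_n(R_n+R_{n-1})=t[n-(2n+\alpha)H_n']$. Adding and subtracting gives
\begin{equation*}
2\beta_n R_{n-1}=t[n-(2n+\alpha)H_n']+t^2 H_n'',\qquad 2\beta_n R_n=t[n-(2n+\alpha)H_n']-t^2 H_n''.
\end{equation*}
Multiplying these and applying \eqref{s21} in the form $\beta_n R_n R_{n-1}=r_n(r_n-t)=t^2 H_n'(H_n'-1)$ yields
\begin{equation*}
4\beta_n\,t^2 H_n'(H_n'-1)=t^2\bigl[n-(2n+\alpha)H_n'\bigr]^2-t^4(H_n'')^2.
\end{equation*}
Substituting $\beta_n=tH_n'-H_n+n(n+\alpha)$ and dividing by $t^2$ gives exactly \eqref{deh}.

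For the Painlev\'e III$'$ equation, the strategy is more involved. Using $R_n'=\alpha_n'$, \eqref{tb2} gives $tR_n'=R_n+(2n+1+\alpha)+\beta_n-\beta_{n+1}$. Differentiating once more and using \eqref{tb1} at indices $n$ and $n+1$ together with \eqref{alpha2} produces
\begin{equation*}
t^2 R_n''(t)=\beta_n R_{n-1}-(\beta_n+\beta_{n+1})R_n+\beta_{n+1}R_{n+1}.
\end{equation*}
The products $\beta_n R_{n-1}$ and $\beta_{n+1}R_{n+1}$ are eliminated via \eqref{s21} at $n$ and $n+1$ in favor of $r_n(r_n-t)/R_n$ and $r_{n+1}(r_{n+1}-t)/R_n$; \eqref{s23} at $n$ and $n+1$ expresses $\beta_n+\beta_{n+1}$ and $(2n+\alpha)r_n+(2n+2+\alpha)r_{n+1}$ in terms of $R_{n-1},R_n,R_{n+1}$ and the two $\beta$'s; and \eqref{s12} together with \eqref{s11} convert the remaining $r_n+r_{n+1}$ and $R_{n\pm 1}$ into expressions in $R_n$ and its $t$-derivatives. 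After substantial but routine simplification, the resulting ODE matches the Painlev\'e III$'$ form in \cite{Mehta} with parameters $(\tilde\alpha,\tilde\beta,\tilde\gamma,\tilde\delta)=(4(2n+1+\alpha),4\alpha,4,-4)$.

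The main obstacle is the algebraic bookkeeping in the Painlev\'e III$'$ derivation: every $n$-shifted quantity must be eliminated in a systematic order so that only $R_n$ and its $t$-derivatives survive, after which the resulting ODE must be matched to the canonical form to read off the four parameters. The $\sigma$-form derivation is comparatively clean because $tH_n'=r_n$ absorbs the $n$-dependence of $\mathrm p(n,t)$ into a single $t$-derivative, so only one shift (from $R_n$ to $R_{n-1}$) needs to be accommodated, and it is handled uniformly by \eqref{tb1}.
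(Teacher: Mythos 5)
Your proposal is correct; note, however, that the paper itself does not prove this theorem --- it is stated as a quotation of Chen and Its \cite{CI2010}, so there is no in-paper argument to compare against. What you have written is essentially a reconstruction of the Chen--Its ladder-operator derivation from the identities already assembled in Section 2 of this paper. I checked both halves. The $\sigma$-form derivation is complete and clean: $tH_n'=r_n$ from (\ref{ddt1}) and (\ref{r12}), $\beta_n=n(n+\alpha)+tH_n'-H_n$ from (\ref{r11}), $t^2H_n''=\beta_n(R_{n-1}-R_n)$ from (\ref{bn3}) with $\beta_n'=tH_n''$, and multiplying your two displayed expressions for $2\beta_nR_{n-1}$ and $2\beta_nR_n$ and invoking (\ref{s21}) gives (\ref{deh}) exactly. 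For the Painlev\'e III$'$ part, the identity $t^2R_n''=\beta_nR_{n-1}-(\beta_n+\beta_{n+1})R_n+\beta_{n+1}R_{n+1}$ is right, and the elimination you outline does close up: after using (\ref{s21}) and (\ref{s23}) at indices $n$ and $n+1$, one is left with the combinations $r_n(r_n-t)+r_{n+1}(r_{n+1}-t)$ and $(2n+\alpha)r_n+(2n+2+\alpha)r_{n+1}$, and resolving these requires both the sum $r_n+r_{n+1}=t-\alpha_nR_n$ from (\ref{s12}) \emph{and} the difference $r_n-r_{n+1}=tR_n'$, which follows from (\ref{equ}) together with $\alpha_n'=R_n'$ and is implicit in your use of (\ref{tb2}) and (\ref{de}); your sketch mentions only the sum, so the difference should be made explicit. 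With both in hand the computation reduces to $t^2R_n''=t^2(R_n')^2/R_n-tR_n'+R_n^3+(2n+1+\alpha)R_n^2+\alpha t-t^2/R_n$, which upon division by $t^2$ is the stated Painlev\'e III$'$ equation with $(\tilde\alpha,\tilde\beta,\tilde\gamma,\tilde\delta)=(4(2n+1+\alpha),4\alpha,4,-4)$.
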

\begin{remark}
Chen and Its \cite{CI2010} stated that equation (\ref{deh}) can be transformed into the Jimbo-Miwa-Okamoto $\s$-form of the Painlev\'{e} III equation. We give the details below.
Let
$$
H_n(t)=\frac{1}{2}\left[\s_n(s)+s^2+n(n+\al)\right],\qquad t=s^2\; (s\geq 0).
$$
Then, equation (\ref{deh}) is converted into the $\s$-form of the Painlev\'{e} III equation \cite[(C.29)]{Jimbo1981}
\begin{align}\label{jmo}
\left(s\s_n''(s)-\s_n'(s)\right)^2=&\:4\left(2\s_n(s)-s\s_n'(s)\right)\left((\s_n'(s))^2-4s^2\right)+2\left(\theta_{0}^2+\theta_{\infty}^2\right)\left((\s_n'(s))^2+4s^2\right)\no\\
&-16\theta_{0}\theta_{\infty}s\s_n'(s),
\end{align}
with parameters $\theta_{0}=\al,\;\theta_{\infty}=-2n-\al$ (the choice of $\theta_{0}$ and $\theta_{\infty}$ is not unique due to the symmetric form of (\ref{jmo})). The results in Theorem \ref{thm1} are valid for $\al>-1$ from our analysis.
\end{remark}

\section{Zeros of orthogonal polynomials with the singularly perturbed Laguerre weight}
It is known that the zeros of orthogonal polynomials are real, simple and are located in the interior of the supporting set of the associated distribution or weight function. The zeros also satisfy the interlacing property. Ismail and Li \cite{IL} utilized chain sequences to propose a method to determine the upper (lower) bounds for the largest (smallest) zeros of orthogonal polynomials in terms of their recurrence coefficients. The three-term recurrence relation and mixed recurrence relation for orthogonal polynomials can be used to find inner bounds for the extreme zeros of polynomials \cite{kd}.
\begin{lemma}
The monic orthogonal polynomials with respect to the singularly perturbed Laguerre weight (\ref{w1}) satisfy the following mixed three-term recurrence relation:
\begin{align}\label{xpp}
x^{2} P_{n-2}(x ; t, \alpha+2) =& \left[\frac{e_{n}}{\beta_{n-1}(t,\al)}\left(x-\alpha_{n-1}(t,\al)\right)-d_{n}\right] P_{n-1}(x ; t, \alpha)\no\\[5pt] &+\left(1-\frac{e_{n}}{\beta_{n-1}(t,\al)}\right) P_{n}(x ; t, \alpha),
\end{align}
where $\alpha_{n}(t,\al)$ and $\beta_{n}(t,\al)$ are the recurrence coefficients in (\ref{xp}) and
$$
d_{n}=\frac{P_{n}(0 ; t, \alpha)}{P_{n-1}(0 ; t, \alpha)}+\frac{P_{n-1}(0 ; t, \alpha+1)}{P_{n-2}(0 ; t, \alpha+1)},
$$
$$
e_{n}=\frac{P_{n-1}(0 ; t, \alpha)}{P_{n-2}(0 ; t, \alpha)}\cdot\frac{P_{n-1}(0 ; t, \alpha+1)}{P_{n-2}(0 ; t, \alpha+1)}.
$$
\end{lemma}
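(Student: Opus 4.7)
The plan is to exploit the weight identity $w(x;t,\alpha+1)=x\,w(x;t,\alpha)$ (and its iterate $w(x;t,\alpha+2)=x^{2}w(x;t,\alpha)$) to connect the three families $P_{k}(x;t,\alpha)$, $P_{k}(x;t,\alpha+1)$ and $P_{k}(x;t,\alpha+2)$ by two short expansions, and then collapse the resulting combination with the three-term recurrence (\ref{xp}).

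First I would expand the degree-$n$ polynomial $xP_{n-1}(x;t,\alpha+1)$ in the orthogonal basis $\{P_{j}(x;t,\alpha)\}$. Writing the expansion coefficients as integrals against $w(x;t,\alpha)$, the factor of $x$ converts the weight to $w(x;t,\alpha+1)$, and orthogonality of $P_{n-1}(\,\cdot\,;t,\alpha+1)$ kills all coefficients with $j\le n-2$. Matching leading coefficients and evaluating at $x=0$ then yields
\begin{equation*}
xP_{n-1}(x;t,\alpha+1)=P_{n}(x;t,\alpha)-\frac{P_{n}(0;t,\alpha)}{P_{n-1}(0;t,\alpha)}\,P_{n-1}(x;t,\alpha).
\end{equation*}
By exactly the same argument applied to $xP_{n-2}(x;t,\alpha+2)$ in the basis $\{P_{j}(x;t,\alpha+1)\}$, I obtain a companion identity with the index shifted by one and $\alpha$ replaced by $\alpha+1$.

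Next I would multiply the second identity by $x$ and substitute the first (and its shift) to eliminate the $\alpha+1$-polynomials. This produces a three-term expression of the form
\begin{equation*}
x^{2}P_{n-2}(x;t,\alpha+2)=P_{n}(x;t,\alpha)-d_{n}P_{n-1}(x;t,\alpha)+e_{n}P_{n-2}(x;t,\alpha),
\end{equation*}
where $d_{n}$ and $e_{n}$ collect the value-at-zero ratios appearing in the statement; this is essentially a bookkeeping check that the sum and product of the two ``evaluation at zero'' constants reproduce the $d_{n}$ and $e_{n}$ defined in the lemma.

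Finally I would use (\ref{xp}) in the form
\begin{equation*}
\beta_{n-1}(t,\alpha)P_{n-2}(x;t,\alpha)=\bigl(x-\alpha_{n-1}(t,\alpha)\bigr)P_{n-1}(x;t,\alpha)-P_{n}(x;t,\alpha)
\end{equation*}
to eliminate $P_{n-2}(x;t,\alpha)$, and rearrange to obtain the claimed identity (\ref{xpp}). There is no real obstacle: the only delicate point is keeping the three indices $(\alpha,\alpha+1,\alpha+2)$ and the shifts $(n,n-1,n-2)$ straight and verifying that the evaluation-at-zero constants combine exactly into $d_{n}$ (as a sum) and $e_{n}$ (as a product).
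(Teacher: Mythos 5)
Your proposal is correct and follows essentially the same route as the paper: the two ``evaluation at zero'' identities you derive from orthogonality are exactly the Christoffel-formula relations the paper invokes (its equations for $xP_{n-1}(x;t,\alpha+1)$ and $x^{2}P_{n-2}(x;t,\alpha+2)$), and the final elimination of $P_{n-2}(x;t,\alpha)$ via the three-term recurrence is identical. The only cosmetic difference is that you rederive the Christoffel-type identities from scratch (using that the zeros lie in $(0,\infty)$, so the denominators $P_{k}(0;t,\alpha)$ are nonzero) rather than citing Christoffel's formula.
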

\begin{proof}
Our proof follows the approach of \cite[Lemma 3.6]{Cl}. We have for our weight
$$
w(x ; t, \alpha+2) = x^{\alpha+2} \mathrm e^{ - x-\frac{t}{x}} = x w(x ; t, \alpha+1).
$$
By using Christoffel's formula (see, e.g., \cite[Theorem 2.7.1]{Ismail}) and letting
$$
d\mu(x)=w(x ; t, \alpha+1)dx,\quad \Phi(x)=x,\quad S_n(x)=P_{n-2}(x;t,\al+2),
$$
we find
$$
{P_{n-2}(0 ; t, \alpha+1)}x P_{n-2}(x ; t, \alpha+2)=\begin{vmatrix}
P_{n-2}(0 ; t, \alpha+1) & P_{n-1}(0 ; t, \alpha+1)  \\
P_{n-2}(x ; t, \alpha+1) & P_{n-1}(x ; t, \alpha+1)
\end{vmatrix}.
$$
It follows that
\begin{equation} \label{pp3}
x^2P_{n-2}(x ; t, \alpha+2) = xP_{n-1}(x ; t, \alpha+1)-\frac{P_{n-1}(0 ; t, \alpha+1)}{P_{n-2}(0 ; t, \alpha+1)} xP_{n-2}(x ; t, \alpha+1).
\end{equation}
Similarly, we have
\begin{equation}\label{pp11}
xP_{n-1}(x ; t, \alpha+1) = P_{n}(x ; t, \alpha)-\frac{P_{n}(0 ; t, \alpha)}{P_{n-1}(0 ; t, \alpha)} P_{n-1}(x ; t, \alpha)
\end{equation}
and
\begin{equation}\label{pp2}
xP_{n-2}(x ; t, \alpha+1) = P_{n-1}(x ; t, \alpha)-\frac{P_{n-1}(0 ; t, \alpha)}{P_{n-2}(0 ; t, \alpha)} P_{n-2}(x ; t, \alpha).
\end{equation}
Substitute (\ref{pp11}) and
(\ref{pp2}) into (\ref{pp3}) and use the three-term recurrence relation (\ref{xp}) to eliminate $P_{n-2}(x ; t, \alpha)$, then $x^2P_{n-2}(x ; t, \alpha+2)$ can be expressed only in terms of $P_{n}(x ; t, \alpha)$ and $P_{n-1}(x ; t, \alpha)$ and  the result is given by (\ref{xpp}).
\end{proof}
\begin{remark}
It can be seen from the above proof that the mixed three-term recurrence relation (\ref{xpp}) (the $t$-dependence may not be displayed) holds for the monic orthogonal polynomials with respect to the general Laguerre-type weight of the form given in (\ref{gf}).
\end{remark}
Following the work on the zeros of semi-classical Laguerre polynomials and generalized Airy polynomials studied by Clarkson and Jordaan \cite{CJ2018,Cl}, we have the following theorem.
\begin{theorem}\label{tz}
Let $x_{1,n} < x_{2,n} <  \cdots  < x_{n,n}$ denote the $n$ zeros of the orthogonal polynomials $P_n(x;t,\al)$ with respect to the singularly perturbed Laguerre weight (\ref{w1}). Then the zeros
\begin{enumerate}
    \item[(i)] are real, distinct and
$$
    0 < x_{1,n} < x_{1,n - 1} < x_{2,n} < \cdots < x_{n - 1,n} < x_{n - 1,n - 1} < x_{n,n};
$$
    \item[(ii)] strictly increase with $\alpha$ and  strictly increase with  $t$ (for any fixed zero $x_{j,n},\;j=1,2,\ldots,n$);
    \item[(iii)] satisfy
    $$
    a_n < x_{1,n} < \al_{n-1} < x_{n,n} < b_n,
    $$
    where
$$
a_n = \min_{1 \leq k \leq n-1} \left\{ \frac{1}{2}\left(\alpha_k + \alpha_{k-1}\right) - \frac{1}{2}\sqrt{\left(\alpha_k - \alpha_{k-1}\right)^2 + 4c_n \beta_k} \right\},
$$
$$
b_n = \max_{1 \leq k \leq n-1} \left\{ \frac{1}{2}\left(\alpha_k + \alpha_{k-1}\right) + \frac{1}{2}\sqrt{\left(\alpha_k - \alpha_{k-1}\right)^2 + 4c_n \beta_k} \right\},
$$
with  $c_n = 4\cos^2\left( \frac{\pi}{n+1} \right) + \varepsilon,\; \varepsilon \geq 0$;
\item[(iv)]
satisfy
\be\label{bo}
0 < x_{1, n} < \alpha_{n-1} + \frac{d_{n} \beta_{n-1}}{e_{n}} < x_{n, n},
\ee
where
$$
d_{n} = \frac{P_{n}(0 ; t, \alpha)}{P_{n-1}(0 ; t, \alpha)} + \frac{P_{n-1}(0 ; t, \alpha+1)}{P_{n-2}(0 ; t, \alpha+1)},
$$
$$
e_{n} =\frac{P_{n-1}(0 ; t, \alpha)}{P_{n-2}(0 ; t, \alpha)}\cdot \frac{P_{n-1}(0 ; t, \alpha+1)}{P_{n-2}(0 ; t, \alpha+1)}.
$$
\end{enumerate}
In (iii) and (iv), $\alpha_{n}=\alpha_{n}(t,\al)$ and $\beta_{n}=\beta_{n}(t,\al)$ are the recurrence coefficients in (\ref{xp}).
\end{theorem}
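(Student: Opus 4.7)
The plan is to handle the four parts in sequence, using classical theory for (i), Markov's monotonicity theorem for (ii), the Ismail--Li chain sequence method for (iii), and the mixed recurrence (\ref{xpp}) just established for (iv). Part (i) is immediate: $w(x;t,\al)$ is a positive weight on $(0,\infty)$ with finite moments, so the standard theory of orthogonal polynomials on the line yields that the zeros of $P_n(x;t,\al)$ are real, simple, lie in $(0,\infty)$, and interlace strictly with those of $P_{n-1}(x;t,\al)$. For (ii), I would invoke Markov's theorem, which asserts that each zero is a strictly increasing function of a parameter $\tau$ provided $\partial_\tau \ln w(x;\tau)$ is strictly increasing in $x$ on the support. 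Computing $\partial_\al \ln w(x;t,\al)=\ln x$ and $\partial_t \ln w(x;t,\al)=-1/x$, both strictly increasing on $(0,\infty)$, immediately gives the monotonicity in $\al$ and $t$.

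For (iii), I would follow the Ismail--Li chain sequence approach. Identifying $x_{1,n}$ and $x_{n,n}$ with the extreme eigenvalues of the $n\times n$ Jacobi matrix $J_n$ with diagonal entries $\al_k$ and subdiagonal entries $\sqrt{\bt_k}$, one shows that a candidate $b$ is an upper bound for $x_{n,n}$ precisely when $\{\bt_k/[(b-\al_k)(b-\al_{k-1})]\}_{k=1}^{n-1}$ forms a chain sequence. Using the maximal parameter sequence of length $n-1$, which has constant value $\cos^2(\pi/(n+1))=c_n/4$, the sufficient termwise inequality $\bt_k\leq \tfrac{c_n}{4}(b-\al_k)(b-\al_{k-1})$ can be solved as a quadratic in $b$; taking the maximum over $k$ yields the stated $b_n$, and an analogous argument for $-J_n$ gives $a_n$. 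The middle inequalities $x_{1,n}<\al_{n-1}<x_{n,n}$ then come from the Rayleigh quotient identity $e_n^{T}J_n e_n = \al_{n-1}$ together with the variational characterization of the extreme eigenvalues.

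For (iv), I would evaluate the mixed recurrence (\ref{xpp}) at an extreme zero of $P_n(\cdot;t,\al)$; at $x=x_{j,n}$ the $P_n$-term vanishes and the relation collapses to
$$
x_{j,n}^{2}\,P_{n-2}(x_{j,n};t,\al+2) = \Bigl[\tfrac{e_n}{\bt_{n-1}}\bigl(x_{j,n}-\al_{n-1}\bigr) - d_n\Bigr] P_{n-1}(x_{j,n};t,\al).
$$
Taking $j=1$, interlacing gives $\mathrm{sgn}\,P_{n-1}(x_{1,n};t,\al)=(-1)^{n-1}$, and a combined use of (i) and (ii) through the chain $x_{1,n}(\al) < x_{1,n-2}(\al) < x_{1,n-2}(\al+2)$ shows $x_{1,n}$ lies below all zeros of $P_{n-2}(\cdot;t,\al+2)$, giving $\mathrm{sgn}\,P_{n-2}(x_{1,n};t,\al+2)=(-1)^{n-2}$. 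Matching signs on the two sides forces the bracket to be negative, and rearranging (using $e_n>0$) yields the upper bound on $x_{1,n}$ in (\ref{bo}); the companion inequality $\al_{n-1}+d_n\bt_{n-1}/e_n<x_{n,n}$ then follows from (iii) and the negativity of $d_n\bt_{n-1}/e_n$. The signs $e_n>0$ and $d_n<0$ come from (\ref{pn0}) combined with the positivity of the Hankel determinants, which force $\mathrm{sgn}\,P_m(0;t,\al)=(-1)^m$ for every $m$. The main obstacle will be this sign bookkeeping, most delicately the localization of the zeros of the shifted family $\{P_m(\cdot;t,\al+2)\}_m$ relative to those of $\{P_m(\cdot;t,\al)\}_m$, which requires the joint use of interlacing from (i) and the monotonicity from (ii) in exactly the right order.
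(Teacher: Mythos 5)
Your overall route matches the paper's: part (i) is the classical Szeg\H{o} result, part (ii) is Markov's monotonicity theorem applied to the same two derivatives $\partial_t\ln w=-1/x$ and $\partial_\al\ln w=\ln x$, the outer bounds in (iii) come from the Ismail--Li chain-sequence criterion, and (iv) rests on the mixed recurrence (\ref{xpp}). You deviate in two places, both legitimately. For the inner bound $x_{1,n}<\al_{n-1}<x_{n,n}$ the paper invokes the Driver--Jordaan corollary applied to $\bt_{n-1}P_{n-2}(x)=(x-\al_{n-1})P_{n-1}(x)-P_n(x)$, whereas you use the Rayleigh quotient of the Jacobi matrix at the last standard basis vector; both work, and yours is arguably more elementary, though strictness requires the standard remark that $e_n$ cannot be an eigenvector of an irreducible Jacobi matrix (eigenvectors have nonvanishing first component). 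For (iv) the paper simply cites the Clarkson--Jordaan interlacing theorem for the zeros of $G(x)P_{n-2}(x;t,\al+2)$ against those of $P_n(x;t,\al)$, while you give a self-contained sign analysis at $x_{1,n}$; your bookkeeping ($\mathrm{sgn}\,P_m(0;t,\al)=(-1)^m$ from (\ref{pn0}) and positivity of Hankel determinants, hence $e_n>0$ and $d_n<0$, plus the chain $x_{1,n}<x_{1,n-2}(\al)<x_{1,n-2}(\al+2)$ from (i) and (ii)) is correct and in fact supplies detail the paper delegates to the citation.

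One slip to fix in (iii): the maximal \emph{constant} chain sequence of length $n-1$ has value $\tfrac14\sec^2\bigl(\tfrac{\pi}{n+1}\bigr)=1/c_n$ (at $\varepsilon=0$), not $\cos^2\bigl(\tfrac{\pi}{n+1}\bigr)=c_n/4$. The termwise sufficient condition for $b$ to dominate $x_{n,n}$ is therefore $c_n\bt_k\le(b-\al_k)(b-\al_{k-1})$, whose relevant quadratic root is exactly the stated $\tfrac12(\al_k+\al_{k-1})+\tfrac12\sqrt{(\al_k-\al_{k-1})^2+4c_n\bt_k}$. As you wrote it, $\bt_k\le\tfrac{c_n}{4}(b-\al_k)(b-\al_{k-1})$ would lead to $\sqrt{(\al_k-\al_{k-1})^2+16\bt_k/c_n}$, which does not reproduce the theorem's $b_n$. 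The approach is the right one; only the chain-sequence constant is misremembered.
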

\begin{proof}
\begin{enumerate}
\item[(i)] See Szeg\H{o} \cite[Theorems 3.3.1 and 3.3.2]{G}.
\item[(ii)] For our weight (\ref{w1}), it can be seen that
    $$
    \frac{\partial}{\partial t}\ln w(x;t,\al)=-\frac{1}{x}
    $$
    and
    $$
    \frac{\partial}{\partial \al}\ln w(x;t,\al)=\ln x
    $$
are both increasing functions of $x$ on $\mathbb{R}^{+}$. Using Markov's monotonicity theorem \cite[Theorem 6.12.1]{G}, we have that the $j$th zero $x_{j,n}$ (for fixed $j=1, 2, \ldots, n$) is an increasing function of both $t$ and $\al$.
\item[(iii)]
From the three-term recurrence relation (\ref{xp}) we have
$$
       \beta_{n-1}P_{n-2}(x;t,\alpha)=(x-\alpha_{n-1}(t,\al))P_{n-1}(x;t,\alpha)-P_{n}(x;t,\alpha),
$$
which corresponds to $k=1$ and $G_1(x)=x-\al_{n-1}(t,\al)$ in \cite[Equation (1)]{kd}. It follows from \cite[Corollary 2.2]{kd} that $\al_{n-1}(t,\al)$ is an inner bound of the extreme zeros $x_{1,n}$ and $x_{n,n}$. The outer bounds $a_n$ and $b_n$ for the extreme zeros come from Ismail \cite[Theorems 7.2.6 and 7.2.7]{Ismail} (see also Ismail and Li \cite[Theorems 2 and 3]{IL}) using chain sequences.
\item[(iv)]
    Using the mixed recurrence relation (\ref{xpp}) and following the similar analysis in \cite[Theorem 3.9]{Cl}, it can be found that the zero of $G(x):=\frac{e_{n}}{\beta_{n-1}}\left(x-\alpha_{n-1}\right)-d_{n}$ (the coefficient of $P_{n-1}(x)$ in (\ref{xpp})) and the $n-2$ zeros of $P_{n-2}(x ; t, \alpha+2)$ interlace with the $n$ zeros of $P_{n}(x ; t, \alpha)$. It follows that the zero $\alpha_{n-1} + \frac{d_{n} \beta_{n-1}}{e_{n}}$  of $G(x)$ must lie between the two extreme zeros of $P_{n}(x ; t, \alpha)$. Hence, we obtain another inner bound for the extreme zeros in (\ref{bo}).\qedhere
\end{enumerate}
\end{proof}
\begin{remark}
From the proof of the above theorem, it can be found that Theorem \ref{tz}(iii) holds for general orthogonal polynomials with a three-term recurrence relation and Theorem \ref{tz}(iv) holds for orthogonal polynomials with respect to the general Laguerre-type weight (\ref{gf}).
\end{remark}
\begin{remark}
If $t=0$, our orthogonal polynomials $P_n(x;t,\al)$ are reduced to the classical (monic) Laguerre polynomials and $\al_n(0,\al)=2n+1+\al,\;\bt_n(0,\al)=n(n+\al)$. In this case, Theorem \ref{tz}(iii) coincides with the result for classical Laguerre polynomials \cite[Theorem 7.2.8]{Ismail} by choosing $c_n=4$ for $a_n$ and $c_n=4\cos^2\left( \frac{\pi}{n+1} \right)$ for $b_n$.
\end{remark}
\begin{lemma}
Let
\be\label{qp}
Q_n(x):=\sqrt{\frac{w(x)}{A_n(x)}}\:P_n(x).
\ee
Then $Q_n(x)$ satisfies the second-order differential equation
\be\label{ode}
Q_n''(x)+F(x)Q_n(x)=0,
\ee
where
\begin{align}\label{fx}
F(x):=&\:\bt_nA_n(x)A_{n-1}(x)-\frac{3(A_n'(x))^2}{4A_n^2(x)}+\frac{A_n''(x)-A_n'(x)\mathrm{v}'(x)-2A_n'(x)B_n(x)}{2A_n(x)}\no\\[8pt]
&-B_n^2(x)+B_n'(x)-B_n(x)\mathrm{v}'(x)-\frac{(\mathrm{v}'(x))^2}{4}+\frac{\mathrm{v}''(x)}{2},
\end{align}
and $A_n(x), B_n(x)$ and $\mathrm{v}(x)$ are given by (\ref{A1}), (\ref{B1}) and (\ref{pt1}), respectively.
\end{lemma}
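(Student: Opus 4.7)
The plan is to apply the standard Liouville normal-form transformation to the second-order equation (\ref{p4}) for $P_n(x)$, choosing the multiplier so as to eliminate the first-derivative term. Write (\ref{p4}) abstractly as $P_n''(x) + p(x) P_n'(x) + q(x) P_n(x) = 0$ with $p(x) = -\mathrm{v}'(x) - A_n'(x)/A_n(x)$ and $q(x)$ the coefficient of $P_n$ in (\ref{p4}). Setting $P_n = g\, Q_n$ and demanding that the $Q_n'$-term vanish forces $g'/g = -p/2$, so
\begin{equation*}
g(x) = \exp\!\left(\tfrac{1}{2}\!\int\!\left(\mathrm{v}'(x) + \frac{A_n'(x)}{A_n(x)}\right) dx \right) = \sqrt{A_n(x)/w(x)},
\end{equation*}
where I have used $\mathrm{v} = -\ln w$. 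This identifies $Q_n = P_n/g = \sqrt{w/A_n}\, P_n$, which agrees with (\ref{qp}) and confirms that the multiplier in the statement of the lemma is exactly the one dictated by Liouville's substitution.

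With this choice of $g$ the identity $g''/g + p\, g'/g = -p'/2 - p^2/4$ is immediate, so the transformed equation reads $Q_n''(x) + F(x)\, Q_n(x) = 0$ with
\begin{equation*}
F(x) = q(x) - \tfrac{1}{2}\,p'(x) - \tfrac{1}{4}\, p^2(x).
\end{equation*}
I would then insert $p = -\mathrm{v}' - A_n'/A_n$ into the last two terms and add $q$ as given by (\ref{p4}). Three sources of rational functions in $A_n$ must be combined: the two $(A_n'/A_n)^2$ contributions coming from $-p'/2$ and $-p^2/4$ collapse to $-3(A_n')^2/(4A_n^2)$; the pieces $A_n''/(2A_n)$, $-\mathrm{v}' A_n'/(2A_n)$, and the $-A_n' B_n/A_n$ term already present in $q$ assemble into $(A_n'' - A_n'\mathrm{v}' - 2 A_n' B_n)/(2A_n)$; the pure potential contributions sum to $\mathrm{v}''/2 - (\mathrm{v}')^2/4$; and the $B_n$-terms together with $\beta_n A_n A_{n-1}$ from $q$ pass through unchanged. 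Adding the five groups reproduces (\ref{fx}).

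The step is conceptually routine — no PIII machinery or identity from Section~2 beyond the differential equation (\ref{p4}) is required — so the only real obstacle is bookkeeping. The two places where a sign slip is easiest are the initial identification $g'/g = -p/2$ and the combination of the $(A_n')^2/A_n^2$ contributions from $-p'/2$ and $-p^2/4$, which have different coefficients. I would therefore keep the three contributions $q$, $-p'/2$, and $-p^2/4$ displayed separately up to the final collection, so that the match with each of the five groups of terms in (\ref{fx}) can be read off line by line.
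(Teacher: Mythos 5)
Your proposal is correct and follows essentially the same route as the paper, which simply substitutes $P_n(x)=Q_n(x)\sqrt{A_n(x)/w(x)}$ into (\ref{p4}); you have merely made explicit that this multiplier is the one dictated by Liouville's normal-form reduction and verified the resulting formula $F=q-\tfrac{1}{2}p'-\tfrac{1}{4}p^2$, whose term-by-term collection does reproduce (\ref{fx}).
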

\begin{proof}
The result is obtained by substituting $P_n(x)=Q_n(x)\sqrt{A_n(x)/w(x)}$ and $w(x)=\mathrm{e}^{-\mathrm{v}(x)}$ into the second-order differential equation (\ref{p4}).
\end{proof}
\begin{theorem}
Let $F(x)$ be the function in (\ref{fx}) and $y_1<y_{2}<y_{3}<\cdots$ be the zeros of $P_n(x)$ in an interval $(c,d)\subset \mathbb{R}^{+}$. Then
\begin{enumerate}
    \item[(i)] if there exists $M_1>0$ such that $F(x)<M_1$ on $(c,d)$, then
    $$
    \Delta y_k\equiv y_{k+1}-y_{k}>\frac{\pi}{\sqrt{M_1}},\qquad k=1,2,\ldots;
    $$
    \item[(ii)] if there exists $M_2>0$ such that $F(x)>M_2$ on $(c,d)$, then
    $$
    \Delta y_k\equiv y_{k+1}-y_{k}<\frac{\pi}{\sqrt{M_2}},\qquad k=1,2,\ldots;
    $$
    \item[(iii)] if $F(x)$ is strictly increasing on $(c,d)$, then $\Delta^2 y_k\equiv y_{k+2}-2y_{k+1}+y_{k}<0\; (y_{k+2}-y_{k+1}<y_{k+1}-y_{k}),\;k=1,2,\ldots$, i.e., the zeros of $P_n(x)$ in $(c,d)$ are concave;
    \item[(iv)] if $F(x)$ is strictly decreasing on $(c,d)$, then $\Delta^2 y_k\equiv y_{k+2}-2y_{k+1}+y_{k}>0\; (y_{k+2}-y_{k+1}>y_{k+1}-y_{k}),\;k=1,2,\ldots$, i.e., the zeros of $P_n(x)$ in $(c,d)$ are convex.
\end{enumerate}
\end{theorem}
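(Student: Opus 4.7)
The plan is to exploit the Liouville-like normal form $Q_n''(x)+F(x)Q_n(x)=0$ from (\ref{ode}) and apply classical Sturm oscillation/comparison theory, treating parts (i)--(ii) and parts (iii)--(iv) separately. The preliminary observation is that on $(c,d)\subset\mathbb{R}^{+}$ the factor $\sqrt{w(x)/A_n(x)}$ in (\ref{qp}) is well-defined and strictly positive (here $w(x)>0$ is immediate and $A_n(x)=\frac{1}{x}+\frac{R_n(t)}{x^2}$ is positive in the region where $F$ is being analyzed), so the zeros $y_1<y_2<\cdots$ of $Q_n$ on $(c,d)$ coincide exactly with those of $P_n$. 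It therefore suffices to analyze the zeros of $Q_n$.

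For part (i), fix two consecutive zeros $y_k<y_{k+1}$ and compare with the explicit solution $v(x)=\sin\!\bigl(\sqrt{M_1}\,(x-y_k)\bigr)$ of $v''+M_1 v=0$, whose zeros are spaced exactly $\pi/\sqrt{M_1}$ apart and one of which is $y_k$. Since $F(x)<M_1$ strictly on $(c,d)$, Sturm's strict comparison theorem forces $v$ to vanish at some point strictly inside $(y_k,y_{k+1})$. Since the next zero of $v$ after $y_k$ is $y_k+\pi/\sqrt{M_1}$, one concludes $y_{k+1}>y_k+\pi/\sqrt{M_1}$. Part (ii) is the mirror image: with $F(x)>M_2$, the roles of $Q_n$ and the comparison sinusoid exchange, and Sturm forces a zero of $Q_n$ strictly between consecutive zeros of $v$, giving $y_{k+1}<y_k+\pi/\sqrt{M_2}$.

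For part (iii), set $h:=y_{k+2}-y_{k+1}>0$ and define $\widetilde{Q}(x):=Q_n(x+h)$, which satisfies $\widetilde{Q}''(x)+F(x+h)\widetilde{Q}(x)=0$. Because $F$ is strictly increasing, $F(x+h)>F(x)$ on the relevant subinterval, so Sturm comparison between $Q_n$ and $\widetilde{Q}$ forces $\widetilde{Q}$ to vanish strictly inside $(y_k,y_{k+1})$. The zeros of $\widetilde{Q}$ are exactly $\{y_i-h\}$; one then checks that $y_{k+2}-h=y_{k+1}$ lies at the endpoint, every $y_i-h$ with $i\le k$ lies to the left of $y_k$, and every $y_i-h$ with $i\ge k+3$ lies to the right of $y_{k+1}$, so the only candidate zero strictly inside is $y_{k+1}-h$. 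Hence $y_k<y_{k+1}-h$, which rearranges to $\Delta^{2}y_k<0$. Part (iv) follows by the symmetric argument with the direction of the shift reversed (or equivalently by replacing $F$ with $-F$ in the monotonicity hypothesis).

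The main obstacle is ensuring the \emph{strict} version of Sturm's theorem applies in part (iii), where $Q_n$ and $\widetilde{Q}$ share the common zero $y_{k+1}=y_{k+2}-h$ at the endpoint of the interval under consideration. I would handle this with the direct Wronskian argument: set $W(x):=Q_n(x)\widetilde{Q}'(x)-Q_n'(x)\widetilde{Q}(x)$, so that the ODEs give
\begin{equation*}
W'(x)=\bigl(F(x)-F(x+h)\bigr)\,Q_n(x)\widetilde{Q}(x),
\end{equation*}
which has a strict sign on any subinterval where $Q_n\widetilde{Q}\neq 0$. Evaluating $W$ at $y_k$ and $y_{k+1}$ (using that these zeros are simple, so $Q_n'(y_k),Q_n'(y_{k+1})\neq 0$) and integrating the sign-definite identity rules out the possibility that $\widetilde{Q}$ does not vanish in the open interval $(y_k,y_{k+1})$, thus yielding the strict inequality needed to conclude. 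The remaining implicit point—that $x+h$ stays in $(c,d)$ when Sturm is applied—is built into the hypothesis that the monotonicity of $F$ holds throughout $(c,d)$ and that the zeros in question all lie there.
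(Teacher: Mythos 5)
Your proposal is correct and takes essentially the same route as the paper: both reduce to the Liouville normal form (\ref{ode}) via the substitution (\ref{qp}), using the strict positivity of $w(x)$ and $A_n(x)$ on $\mathbb{R}^{+}$ so that $P_n$ and $Q_n$ share zeros, and then apply Sturm comparison/convexity theory. The only difference is that the paper cites the classical Sturm comparison and convexity theorems from the literature, whereas you prove them in situ (including a correct Wronskian treatment of the shared endpoint zero in part (iii)); the mathematical content is the same.
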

\begin{proof}
By (\ref{w1}) and (\ref{anz1}) we see that $w(x)$ and $A_n(x)$ are both strictly positive on $\mathbb{R}^{+}$. It follows from (\ref{qp}) that $P_n(x)$ and $Q_n(x)$ have the same zeros in $\mathbb{R}^{+}$. Applying Sturm's comparison and convexity theorems (see, e.g., \cite{DGS,JT}) to the second-order differential equation (\ref{ode}), we establish the theorem.
\end{proof}

\section{Large $n$ asymptotics}
For $\alpha > -1,\; t \ge  0$, consider the singularly perturbed Laguerre unitary ensemble on the space of $n\times n$ positive definite Hermitian matrices $M=(M_{ij})_{n\times n}$ with the probability distribution
$$
\frac{1}{Z_n(t)}(\det M)^{\al}\mathrm{e}^{-\mathrm{tr}\:V_{t}(M)}dM,\qquad dM=\prod_{i=1}^{n}dM_{ii}\prod_{1\leq i< j\leq n}d\mathfrak{R}M_{ij}d\mathfrak{I}M_{ij},
$$
where $Z_n(t)$ is the normalization constant or partition function, and $V_{t}(x)=x+\frac{t}{x}$. This distribution is invariant under every unitary transformation and gives rise to a probability distribution on the eigenvalues $x_1, x_2, \ldots, x_n$ of $M$ in the form
	$$
	P\left(x_1, x_2, \ldots, x_n\right) \prod_{j=1}^n d x_j=\frac{1}{Z_n(t)} \prod_{1 \leq i<j \leq n}(x_j-x_i)^2 \prod_{k=1}^n w(x_k;t) d x_k,\qquad x_1, x_2, \ldots, x_n\in \mathbb{R}^{+},
	$$
	where $ w(x;t)$ is the singularly perturbed Laguerre weight in (\ref{w1}). According to (\ref{mi}), there is a simple relation between the Hankel determinant $D_n(t)$ and the partition function $Z_n(t)$:
$$
D_n(t)=\frac{1}{n!}Z_n(t).
$$
For more information about the topic of random matrix theory, see \cite{Deift,Forrester2010,Mehta}.

In the framework of Dyson's Coulomb fluid approach \cite{Dyson}, the eigenvalues of the Hermitian matrices from the unitary ensemble can be approximated by a continuous fluid with an equilibrium density when $n$ is sufficiently large. For our problem, it can be seen that the potential $\mathrm{v}(x)$ in (\ref{pt1}) satisfies the condition that $x\mathrm{v}'(x)$ is increasing on $\mathbb{R}^{+}$. This leads to the fact that the support of the equilibrium density is a single interval, denoted by $(a, b),\;a>0$; see \cite[p. 199]{Saff}.

As stated in Chen and Ismail's work \cite{ci1997}, the equilibrium density $\sigma(\cdot)$ can be derived from a constraint minimization problem:
\begin{equation}\label{fe}
F[\s]:=\int_{a}^{b}\s(x)\mathrm{v}(x)dx-\int_{a}^{b}\int_{a}^{b}\s(x)\ln|x-y|\s(y)dxdy
\end{equation}
subject to the normalization condition
\be\label{nc}
\int_{a}^{b}\s(x)dx=n.
\ee
Upon minimization, the equilibrium density $\sigma(x)$ is found to satisfy the integral equation
\begin{equation}\label{ie}
\mathrm{v}(x)-2\int_{a}^{b}\ln|x-y|\s(y)d y=A,\qquad x\in (a,b),
\end{equation}
where $A$ is the Lagrange multiplier, a constant independent of $x$.
By differentiating with respect to $x$, equation (\ref{ie}) is converted into the singular integral equation
\begin{equation}\label{sie}
\mathrm{v}'(x)-2 P\int_{a}^{b}\frac{\sigma(y)}{x-y}d y=0,\qquad x\in (a,b),
\end{equation}
where $P$ denotes the Cauchy principal value. From (\ref{sie}) and with the aid of (\ref{nc}), it can be found that the endpoints $a$ and $b$ are determined by two supplementary conditions
\begin{equation}\label{sup1}
\int_{a}^{b}\frac{\mathrm{v}'(x)}{\sqrt{(b-x)(x-a)}}d  x=0,
\end{equation}
\begin{equation}\label{sup2}
\int_{a}^{b}\frac{x\:\mathrm{v}'(x)}{\sqrt{(b-x)(x-a)}}d  x=2 \pi n.
\end{equation}

Taking a partial derivative with respect to $n$ in (\ref{fe}) and making use of (\ref{nc}) and (\ref{ie}), we get the relation \cite[(2.14)]{ci1997}
\begin{equation}\label{pd}
\frac{\partial F[\s]}{\partial n}=A.
\end{equation}
It was also demonstrated in \cite{ci1997}  that as $n\rightarrow\infty$, the recurrence coefficients have the following asymptotic behavior:
\begin{subequations}\label{eqs}
\begin{align}
&\alpha_n=\frac{a+b}{2}+O\left(\frac{\partial^2 A}{\partial t \partial n}\right),\\[5pt]
&\beta_n=\left(\frac{b-a}{4}\right)^2\left(1+O\left(\frac{\partial^3 A}{\partial n^3}\right)\right).
\end{align}
\end{subequations}
Furthermore, Chen and Lawrence \cite{CL1998} showed that the monic orthogonal polynomials $P_n(z;t,\al)$ have the large $n$ asymptotic behavior
\be\label{pnz}
P_n(z;t,\al)\sim \exp(-S_1(z)-S_2(z)),\qquad z\in \mathbb{C}\backslash[a,b],
\ee
where
$$
S_1(z)=\frac{1}{4}\ln\left[\frac{16(z-a)(z-b)}{(b-a)^2}\left(\frac{\sqrt{z-a}-\sqrt{z-b}}{\sqrt{z-a}+\sqrt{z-b}}\right)^2\right],
$$
$$
S_2(z)=-n\ln\left(\frac{\sqrt{z-a}+\sqrt{z-b}}{2}\right)^2+\frac{1}{2\pi}\int_{a}^{b}\frac{\mathrm{v}(x)}{\sqrt{(b-x)(x-a)}}\left[\frac{\sqrt{(z-a)(z-b)}}{x-z}+1\right]dx.
$$

Substituting (\ref{pt1}) into (\ref{sup1}) and (\ref{sup2}) respectively, we obtain a system of algebraic equations satisfied by $X:=\frac{a+b}{2}$ (arithmetic mean) and $Y:=\sqrt{ab}$ (geometric mean) as follows:
$$tX+\al Y^2-Y^3=0,$$
\be\label{sys2}
X - {\frac{t}{{Y}}}=  2n +\alpha.
\ee
Eliminating $X$ from the above system gives rise to a quartic equation
    \begin{equation}\label{Y}
        Y^4-\alpha Y^3-(2n+\alpha)tY-t^2=0.
    \end{equation}
Using Mathematica, we find that equation (\ref{Y}) has a unique solution under the condition $Y>0$, and it has the series expansion as $n\rightarrow\infty$
\begin{align}\label{Y1}
Y =&\:\kappa  t^{1/3}n^{1/3}+\frac{\alpha }{3}+\frac{\alpha ^2 }{9 \kappa t^{1/3}n^{1/3}}+\frac{\alpha \left(2 \alpha ^2+27 t\right)}{81\kappa^2t^{2/3}n^{2/3}}+\frac{t}{6 n}-\frac{ \alpha  \left(2 \alpha ^4+27 \alpha ^2 t+81 t^2\right)}{1458\kappa t^{4/3} n^{4/3}}\no\\[8pt]
&-\frac{\alpha ^2 \left(7 \alpha ^4+108 \alpha ^2 t+972 t^2\right)}{13122\kappa^2 t^{5/3}n^{5/3}}-\frac{\alpha  t}{12 n^2}+ O(n^{ - {7/3}}),
\end{align}
where $\kappa= \sqrt[3]{2}$.
It follows from (\ref{sys2}) that
\begin{align}\label{b2}
X=\dfrac{a+b}{2}=&\:2n + \alpha  + \frac{t^{2/3}}{\kappa n^{1/3}} -\frac{ \alpha  t^{1/3}}{3 \kappa^2{n}^{2/3}}+\frac{\al( \alpha ^2-27  t)}{162 \kappa t^{1/3}n^{4/3}}+\frac{ \alpha ^4+54 \alpha ^2 t-81 t^2}{486\kappa^2 t^{2/3}n^{5/3}}\no\\[8pt]
&+\frac{\alpha  t}{12 n^2}+O(n^{ - 7/3})
\end{align}
and
\begin{align}\label{b4}
\left(\dfrac{b-a}{4}\right)^2=&\:\frac{X^2-Y^2}{4}\no\\
=&\:n^2+ \alpha n+\frac{ t^{2/3}n^{2/3}}{2 \kappa}-\frac{ \kappa\alpha   t^{1/3}n^{1/3}}{3} +\frac{\alpha ^2}{6}+\frac{\alpha \left(27 t-4 \alpha ^2\right)}{162 \kappa t^{1/3}n^{1/3}}\no\\[8pt]
&-\frac{5 \alpha ^4+108 \alpha ^2 t+81 t^2}{972\kappa^2 t^{2/3}n^{2/3}}+\frac{\alpha ^2  \left(7 \alpha ^4+108 \alpha ^2 t-243 t^2\right)}{26244\kappa t^{4/3}n^{4/3}}\no\\[8pt]
&+\frac{\alpha \left(8 \alpha ^6+135 \alpha ^4 t+1620 \alpha ^2 t^2+2187 t^3\right)}{78732\kappa^2 t^{5/3}n^{5/3}}+\frac{t^2}{48 n^2}+O(n^{ -{7/3}}).
\end{align}

Following the similar computations in \cite[p. 406--407]{Min2021}, multiplying by $\frac{1}{\sqrt{(b-x)(x-a)}}$ on both sides of (\ref{ie}) and integrating with respect to $x$ from $a$ to $b$,  we have
$$
\begin{aligned}
A =&\: \dfrac{1}{\pi }\int_a^b {\dfrac{\mathrm{v}(x)}{\sqrt {(b - x)(x-a)}}} dx - 2n\ln\frac{b-a}{4}\\
            =&\: \frac{a+b}{2}+\frac{t}{\sqrt{ab}}-2\alpha\ln \frac{\sqrt{a}+\sqrt{b}}{2}- 2 n\ln \dfrac{b-a}{4}\\
            =&\:X+\frac{t}{Y}-  n\ln \frac{X-Y}{2}-(n+\al)\ln\frac{X+Y}{2}.
\end{aligned}
$$
Substituting (\ref{Y1}) and (\ref{b2}) into the above, we find as $n\rightarrow\infty$
\begin{align}\label{a}
A =&- 2n \ln n + 2n- \alpha \ln n+\frac{3  t^{2/3}}{2\kappa n^{1/3}}-\frac{\alpha   t^{1/3}}{\kappa^2 n^{2/3}}-\frac{\alpha ^2}{3 n}-\frac{\al(2 \alpha ^2+27  t)}{108\kappa t^{1/3}n^{4/3}}\no\\[8pt]
&-\frac{ 2 \alpha ^4-216 \alpha ^2 t+81 t^2}{648\kappa^2 t^{2/3} n^{5/3}}+\frac{\alpha  (\alpha ^2+t)}{12 n^2} + O(n^{-7/3}).
\end{align}

Using the above results, we are able to derive the asymptotic expansions of the recurrence coefficients as $n\rightarrow\infty$.
\begin{theorem}\label{rc}
For fixed $t>0$, the recurrence coefficients $\alpha_n(t)$ and $\beta_{n}(t)$ have the following large $n$ asymptotic expansions:
\begin{subequations}\label{aba}
\begin{align}
\alpha_n(t)=&\:2 n+\alpha+1 +\frac{t^{2/3}}{\kappa n^{1/3}}-\frac{\alpha  t^{1/3}}{3\kappa^2 n^{2/3}}+\frac{(\alpha +1)  [\alpha(\alpha -1) -27 t]}{162 \kappa t^{1/3}n^{4/3}}\no\\[8pt]
&+\frac{\alpha ^2(\alpha ^2-1)+54 \alpha(\alpha +1) t-81 t^2}{486\kappa^2 t^{2/3}n^{5/3}}+\frac{\alpha  \left(\alpha ^2+81 t^2-1\right)}{972  tn^2}+O(n^{-7/3}),\label{aa}\\[8pt]
\beta_n(t)=&\:n^2 +\alpha n+\frac{t^{2/3} n^{2/3} }{2 \kappa}-\frac{\kappa \alpha   t^{1/3}n^{1/3}}{3} +\frac{6 \alpha ^2-1}{36} -\frac{\al(4 \alpha ^2-27  t-4 )}{162\kappa t^{1/3}n^{1/3}}\no\\[8pt]
&-\frac{5 \alpha ^4+\alpha ^2 (108 t-5)+81 t^2}{972\kappa^2 t^{2/3} n^{2/3}}-\frac{\alpha (\alpha ^2-1)}{486  tn}+O(n^{-4/3}),\label{ab}
\end{align}
\end{subequations}
where $\kappa= \sqrt[3]{2}$.
\end{theorem}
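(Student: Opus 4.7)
The plan is to treat the Coulomb-fluid outputs (\ref{b2}) and (\ref{b4}) as seed data that fix the shape of the asymptotic ansatz for $\alpha_n(t)$ and $\beta_n(t)$, and then to use the exact first-order discrete system (\ref{d11})--(\ref{d12}) of Theorem \ref{th1} to pin down every coefficient of the expansion. The Coulomb-fluid formulas (\ref{eqs}) cannot supply these coefficients directly because their error terms involve derivatives of the Lagrange multiplier $A$ with respect to $n$ and $t$; in particular, they miss the $O(1)$ finite-$n$ correction in $\alpha_n$ (already visible at $t=0$, where $\alpha_n=2n+1+\alpha$ but the Coulomb-fluid prediction $X$ gives only $2n+\alpha$) and subtler rational corrections inside $\beta_n$ such as the $-\tfrac{1}{36}$ shift in the $O(1)$ term of (\ref{ab}).

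Concretely, I would posit formal series of the form
\begin{align*}
\alpha_n(t) &= 2n + \sum_{k=0}^{\infty} c_k(t)\,n^{-k/3},\qquad
\beta_n(t)  = n^2 + \alpha n + \sum_{k=-2}^{\infty} d_k(t)\,n^{-k/3},
\end{align*}
whose leading fractional-power terms are prescribed by (\ref{b2}) and (\ref{b4}) via (\ref{eqs}), with the $O(1)$ coefficients $c_0$ and the relevant $d_k$ left as unknowns. I would then replace $\alpha_{n-1}$ and $\beta_{n\pm 1}$ by their Taylor shifts $f(n\pm1)=\sum_{j\ge0}(\pm1)^j f^{(j)}(n)/j!$, viewing the coefficient functions as smooth in a continuous variable $n$; because derivatives with respect to $n$ lower powers by integer amounts, the shifts are compatible with the base $n^{-1/3}$. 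Substituting these ansätze into (\ref{d11}) and (\ref{d12}), expanding in descending fractional powers of $n$, and equating each order to zero produces a triangular system of algebraic relations. The leading $O(n^3)$ balance in (\ref{d11}) is automatic from the seed data; the next order forces $c_0=\alpha+1$; and each successive order determines exactly one new coefficient, alternating between $c_k$ and $d_k$, until the expansions (\ref{aa})--(\ref{ab}) are obtained to the stated accuracy.

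The principal obstacle is bookkeeping rather than conceptual: both (\ref{d11}) and (\ref{d12}) are quadratic in the recurrence coefficients, the two Taylor shifts couple many fractional orders simultaneously, and the joint $\alpha$- and $t$-dependence of each coefficient has to be propagated carefully through more than a dozen orders. At several stages one encounters a quadratic for a new coefficient and must select the correct root; these ambiguities are resolved by matching the Coulomb-fluid leading behavior (\ref{b2})--(\ref{b4}) or, equivalently, by demanding consistency with the classical Laguerre values $\alpha_n(0)=2n+1+\alpha$, $\beta_n(0)=n(n+\alpha)$. The resulting calculation is purely algebraic and is most efficiently executed with a computer algebra system such as Mathematica.
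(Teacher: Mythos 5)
Your proposal is correct and follows essentially the same route as the paper: motivate an ansatz in descending powers of $n^{1/3}$ from the Coulomb-fluid data (\ref{eqs}), (\ref{b2}), (\ref{b4}), substitute it into the first-order discrete system (\ref{ds1}), expand the shifted terms $\alpha_{n-1}$, $\beta_{n\pm1}$, and determine the coefficients order by order (the paper likewise resolves the only genuine ambiguity at the leading pair $a_0=\alpha+1$, $b_{-3}=\alpha$, where the two equations force a unique solution). The one caution is that the Coulomb-fluid expansions should only be trusted for the form of the series and its very first fractional terms --- as you note for the $O(1)$ term of $\alpha_n$, the coefficients of $X$ and $\left(\frac{b-a}{4}\right)^2$ already disagree with the true ones at orders $n^{-4/3}$ and $n^{-1/3}$ respectively, so all lower-order coefficients must be taken as unknowns and extracted from the discrete system, exactly as the paper does.
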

\begin{proof}
In view of (\ref{eqs}), (\ref{b2}),  (\ref{b4}) and (\ref{a}), it can be seen that $\al_n$ and $\bt_{n}$ have the large $n$ expansion forms
\begin{subequations}\label{abas}
\begin{align}
&\alpha _n = 2n+a_0+\sum _{j=1}^{\infty} \frac{a_j}{n^{j/3}},\\
&{\beta _n} =n^2+b_{-3}n+\sum _{j=-2}^{\infty} \frac{b_j}{n^{j/3}},
\end{align}
\end{subequations}
where $a_j, j=0, 1, \ldots$ and $b_j, j=-3, -2, \ldots$ are the expansion coefficients to be determined. Substituting (\ref{abas}) into the discrete system for the recurrence coefficients in (\ref{ds1}) and taking a large $n$ limit, we obtain the expansion coefficients $a_j$ and  $b_j$ recursively by letting all coefficients of powers of $n$ be zero. For example, $a_0$ and $b_{-3}$ satisfy the system of equations
\begin{align}
&a_0^2 -2 a_0 \left(b_{-3}+1\right)+(2 \alpha +3) b_{-3}-\alpha ^2-\alpha +1=0,\no\\
&\left(a_0-\alpha-1\right)^2 \left(b_{-3}-\alpha\right)=0,\no
\end{align}
which gives the unique solution
$$
a_0= \alpha +1,\qquad\qquad {b_{ - 3}} = \alpha.
$$
The other expansion coefficients are given by
$$
\begin{aligned}
&a_{1}=\frac{t^{2/3}}{\kappa},\qquad b_{-2}=\frac{t^{2/3}}{2 \kappa};\qquad a_{2}= -\frac{\alpha  t^{1/3}}{3\kappa^2 },\qquad b_{-1}=-\frac{\kappa \alpha   t^{1/3}}{3};\\[8pt]
&a_{3}= 0,\qquad b_{0}=\frac{6 \alpha ^2-1}{36} ;\qquad a_{4}=\frac{(\alpha +1)  [\alpha(\alpha -1) -27 t]}{162 \kappa t^{1/3}},\qquad b_{1}=-\frac{\al(4 \alpha ^2-27  t-4 )}{162\kappa t^{1/3}};\\[8pt]
&a_{5}=\frac{\alpha ^2(\alpha ^2-1)+54 \alpha(\alpha +1) t-81 t^2}{486\kappa^2 t^{2/3}},\qquad b_{2}=-\frac{5 \alpha ^4+\alpha ^2 (108 t-5)+81 t^2}{972\kappa^2 t^{2/3} };\\[8pt]
&a_{6}=\frac{\alpha  \left(\alpha ^2+81 t^2-1\right)}{972  t},\qquad b_{3}=-\frac{\alpha (\alpha ^2-1)}{486  t}
\end{aligned}
$$
and so on. The theorem is then established.
\end{proof}
In the following, we derive the large $n$ asymptotic expansions of the sub-leading coefficient $\mathrm p(n,t)$, the quantity $H_n(t) = t\frac{d}{{dt}}\ln {D_n}(t)$, the Hankel determinant $D_n(t)$ and the normalized constant $h_n(t)$ for fixed $t>0$ on the basis of the large $n$ asymptotic expansions of the recurrence coefficients obtained in Theorem \ref{rc}.

\begin{theorem}
The sub-leading coefficient of the monic orthogonal polynomials, $\mathrm p(n,t)$, has the large $n$ asymptotic expansion
\begin{align}\label{p1}
\mathrm{p}(n,t)=&- {n^2} - \alpha n-\frac{3 t^{2/3}n^{2/3} }{2 \kappa}+\frac{\alpha   t^{1/3}n^{1/3}}{\kappa^2} -\frac{6 \alpha ^2-18 t-1}{36} +\frac{\alpha  \left(\alpha ^2-27 t-1\right)}{54 \kappa t^{1/3}n^{1/3}}\no\\[8pt]
&+\frac{ \alpha ^4+\alpha ^2 (54 t-1)-81 t^2}{324\kappa^2 t^{2/3}n^{2/3}}+\frac{\alpha  \left(\alpha ^2+81 t^2-1\right)}{972 t n}+ O( n^ {- 4/3}),
\end{align}
where  $\kappa= \sqrt[3]{2}$.
\end{theorem}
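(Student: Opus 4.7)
The plan is to bypass the telescoping sum $\mathrm{p}(n,t) = -\sum_{j=0}^{n-1}\al_j(t)$ from (\ref{pn}) (which would require an Euler--Maclaurin treatment generating inconvenient Hurwitz zeta constants) and instead exploit the algebraic identity $\mathrm{p}(n,t) = r_n(t) - \bt_n(t)$ of (\ref{pp}), combined with the closed-form expression (\ref{s25}) for $r_n(t)$ in terms of the recurrence coefficients. Eliminating $r_n(t)$ between these two relations yields
\be\label{eq_plan}
\mathrm{p}(n,t) = \frac{nt - \bt_n(t)\bigl(\al_n(t)+\al_{n-1}(t)-4n-2\al\bigr)}{2n+\al} - \bt_n(t),
\ee
which realizes $\mathrm{p}(n,t)$ as a single rational expression in $n$, $\al_n(t)$, $\al_{n-1}(t)$ and $\bt_n(t)$, with no sums or integrals remaining.

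The substitution step is then mechanical. I insert the Puiseux expansions (\ref{aa}) and (\ref{ab}) of $\al_n(t)$ and $\bt_n(t)$ from Theorem \ref{rc} into (\ref{eq_plan}); the only extra ingredient is $\al_{n-1}(t)$, obtained from (\ref{aa}) by the index shift $n\mapsto n-1$ followed by Taylor-expanding each factor $(1-1/n)^{-k/3}$ in powers of $1/n$ to return the whole series to powers of $n^{-1/3}$. After expanding $(2n+\al)^{-1}$ similarly, the right-hand side of (\ref{eq_plan}) collapses to a single series in $n^{-1/3}$ whose coefficients are read off by inspection. It is worth noting which pieces of the answer arise from where: $-\bt_n(t)$ alone already produces $-n^2-\al n-\frac{t^{2/3}}{2\kappa}n^{2/3}+\cdots$, while the additional $-\frac{t^{2/3}}{\kappa}n^{2/3}$ needed to recover the stated coefficient $-\frac{3t^{2/3}}{2\kappa}$ comes from the leading behaviour $\al_n+\al_{n-1}-4n-2\al \sim \frac{2t^{2/3}}{\kappa n^{1/3}}$ multiplied by $\bt_n\sim n^2$ and divided by $2n+\al$.

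The main obstacle is not conceptual but one of bookkeeping. Because the prefactor $\bt_n(t) \sim n^2$ amplifies any truncation of $\al_n+\al_{n-1}-4n-2\al$ by two powers of $n$, the expansions of $\al_n$, $\al_{n-1}$ and $\bt_n$ must be carried several orders deeper than the final accuracy $O(n^{-4/3})$ sought in (\ref{p1}); a term dropped prematurely in (\ref{aa}) will contaminate a displayed coefficient in (\ref{p1}). The safest execution is therefore a symbolic expansion of (\ref{eq_plan}) to uniform order, followed by a power-by-power comparison with the asserted series. Once this is carried out, the identification of each coefficient is automatic and the theorem follows.
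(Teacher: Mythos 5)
Your proposal is correct and follows essentially the same route as the paper: the paper's equation for $\mathrm{p}(n,t)$ is exactly your expression $\frac{nt-\bt_n(\al_n+\al_{n-1}-4n-2\al)}{2n+\al}-\bt_n$ written over a common denominator, obtained from the same two identities, followed by the same substitution of the expansions from Theorem \ref{rc}. Your remark about carrying the expansions of $\al_n$, $\al_{n-1}$ and $\bt_n$ to sufficiently high order because of the $\bt_n\sim n^2$ amplification is a sound practical caution, but it does not alter the argument.
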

\begin{proof}
From (\ref{s25}) and (\ref{pp}), we express $\mathrm{p}(n,t)$ in terms of the recurrence coefficients
\be\label{pnt}
\mathrm{p}(n,t)=\frac{nt-\left(\al_n(t)+\al_{n-1}(t)-2n-\al\right)\bt_n(t)}{2n+\al}.
\ee
Substituting (\ref{aba}) into the above and taking a large $n$ limit, we obtain the desired result.
\end{proof}
\begin{theorem}\label{hnt}
The quantity $H_n(t) = t\frac{d}{{dt}}\ln {D_n}(t)$ has the large $n$ asymptotic expansion
\begin{align}\label{dd}
H_n({t})=&-\frac{3 t^{2/3}n^{2/3} }{2 \kappa}+\frac{\alpha   t^{1/3}n^{1/3}}{\kappa^2} -\frac{6 \alpha ^2-18 t-1}{36} +\frac{\alpha  \left(\alpha ^2-27 t-1\right)}{54 \kappa t^{1/3}n^{1/3}}\no\\[8pt]
&+\frac{ \alpha ^4+\alpha ^2 (54 t-1)-81 t^2}{324\kappa^2 t^{2/3}n^{2/3}}+\frac{\alpha  \left(\alpha ^2+81 t^2-1\right)}{972 t n}+ O( n^ {- 4/3}),
\end{align}
where  $\kappa= \sqrt[3]{2}$.
\end{theorem}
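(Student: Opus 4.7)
The plan is to exploit the identity already established in equation (\ref{r12}), namely
\begin{equation*}
H_n(t) = n(n+\alpha) + \mathrm{p}(n,t),
\end{equation*}
which reduces the asymptotic analysis of $H_n(t)$ entirely to that of the sub-leading coefficient $\mathrm{p}(n,t)$. Since the large $n$ expansion of $\mathrm{p}(n,t)$ is given explicitly in (\ref{p1}), no new asymptotic input is needed.

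The proof will proceed in a single step: substitute (\ref{p1}) into $H_n(t) = n(n+\alpha) + \mathrm{p}(n,t)$. The leading terms $-n^2 - \alpha n$ in $\mathrm{p}(n,t)$ cancel precisely against $n(n+\alpha)$, so the remaining fractional powers
\begin{equation*}
-\tfrac{3}{2\kappa}t^{2/3}n^{2/3} + \tfrac{\alpha}{\kappa^2}t^{1/3}n^{1/3} - \tfrac{6\alpha^2 - 18t - 1}{36} + \cdots
\end{equation*}
carry over verbatim to $H_n(t)$, yielding (\ref{dd}). All coefficients of $n^{-j/3}$ for $j \geq -2$ match term by term because the identity $H_n - n(n+\alpha) = \mathrm{p}(n,t)$ is exact (not asymptotic), so no additional error terms are introduced and the remainder retains the same $O(n^{-4/3})$ order.

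There is essentially no obstacle: the real work was done earlier, first in deriving the expansions of $\alpha_n$ and $\beta_n$ from the discrete system (\ref{ds1}) together with the Coulomb-fluid input (\ref{b2}), (\ref{b4}), (\ref{a}), and then in using (\ref{pnt}) to pass to $\mathrm{p}(n,t)$. The only care required is bookkeeping: verify that $n(n+\alpha)$ cancels the $-n^2 - \alpha n$ contribution cleanly and that the written expansion (\ref{dd}) matches the tail of (\ref{p1}) coefficient by coefficient down to the $n^{-1}$ term. Accordingly, the proof can be stated in one or two lines, citing (\ref{r12}) and the preceding theorem on $\mathrm{p}(n,t)$.
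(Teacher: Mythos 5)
Your proposal is correct and is exactly the paper's argument: the authors also prove Theorem \ref{hnt} by combining the exact identity (\ref{r12}), $H_n(t)=n(n+\alpha)+\mathrm{p}(n,t)$, with the expansion (\ref{p1}), so that $n(n+\alpha)$ cancels the leading $-n^2-\alpha n$ and the remaining terms carry over verbatim. No gaps.
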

\begin{proof}
The result is from the combination of (\ref{r12}) and (\ref{p1}).
\end{proof}
\begin{remark}
Our results for the large $n$ asymptotic expansions of the recurrence coefficients and the quantity $H_n(t)$ in Theorems \ref{rc} and \ref{hnt} coincide with those in the work of Xu, Dai and Zhao \cite[Corollary 1]{Xu2} using the Deift-Zhou steepest descent method for Riemann-Hilbert problems. Our method has the advantage of finding as many terms as one wants for the expansions. Furthermore, it can be seen that the large $n$ asymptotic expansions of the recurrence coefficients and the quantity $H_n(t)= t\frac{d}{{dt}}\ln {D_n}(t)$ are singular at the origin, which implies that the large $n$ asymptotic expansion of the Hankel determinant $D_n(t)$ can not be obtained directly by integrating the large $n$ asymptotic expansion of $H_n(t)/t$ from $0$ to $t$. It was pointed out by Mezzadri and Mo \cite[Section 1.1]{MM} that computing asymptotic formulae of Hankel determinants is a very important task---often a very difficult one---in many branches of mathematics and physics. The asymptotics of Hankel determinants depend crucially on the analytic properties of the weight $w(x)$. Usually, singular weights are the most challenging.
\end{remark}

Xu, Dai and Zhao \cite[Theorem 1]{Xu2} proved that the Hankel determinant $D_n(t)$ has the large $n$ asymptotic expansion
$$
\ln D_n(t)-\ln D_n(0)=\left(1+O(n^{-1/3})\right)\int_{0}^{t}\frac{1-4\al^2-8r(2n\xi)}{16\xi}d\xi,
$$
where the error term is uniformly valid for $t\in (0, d]$ and $d$ is a fixed positive constant. Here, $r(s)$ is a particular solution to the third-order nonlinear differential equation
$$
2s^2r'r'''-s^2(r'')^2+2sr'r''-4s(r')^3+\left(2r-\frac{1}{4}\right)(r')^2+1=0
$$
and this equation can be reduced to a particular Painlev\'{e} III equation. Moreover, $r(s)$ is analytic for $s\in(0,+\infty)$ and satisfies the following boundary conditions
$$
r(0)=\frac{1-4\al^2}{8}
$$
and
\be\label{rs}
r(s)=\frac{3}{2}s^{2/3}-\al s^{1/3}+O(1),\qquad s\rightarrow+\infty.
\ee
For fixed $t\in (0, d]$, note that
\begin{align}
\int_{0}^{t}\frac{1-4\al^2-8r(2n\xi)}{16\xi}d\xi&=\int_{0}^{2nt}\frac{1-4\al^2-8r(s)}{16s}ds\no\\
&=\int_{0}^{1}\frac{1-4\al^2-8r(s)}{16s}ds+\int_{1}^{2nt}\frac{1-4\al^2-8r(s)}{16s}ds.\no
\end{align}
By making use of (\ref{rs}), we have
$$
\int_{0}^{t}\frac{1-4\al^2-8r(2n\xi)}{16\xi}d\xi=-\frac{9}{8}(2nt)^{2/3}+\frac{3}{2}\al(2nt)^{1/3}+O(\ln n),\qquad n\rightarrow\infty.
$$

It is known that \cite[p. 321]{Mehta}
\begin{align}
D_n(0)&=\det\left(\int_{0}^{\infty}x^{i+j}{x^\alpha }{\mathrm e^{ - x}}dx\right)_{i,j=0}^{n-1}\no\\
&=\frac{G(n+1)G(n+\al+1)}{G(\al+1)}\no
\end{align}
and as $n\rightarrow\infty$
\begin{align}
\ln D_n (0)= &\:{n^2}\ln n - \dfrac{3}{2}{n^2}+ \alpha n \ln n- \left(\al-\ln (2\pi)\right)n+ \dfrac{3\alpha ^2- 1}{6}\ln n\no\\
&+ \dfrac{\alpha}{2}\ln (2\pi)+ 2 \zeta'(-1)- \ln G( \alpha+1 ) +O(n^{-1}),\no
\end{align}
where $G(z)$ is the Barnes $G$-function that satisfies the relation \cite{ew,Voros}
$$
G(z+1)=\Gamma(z)G(z),\qquad G(1)=1
$$
and $\zeta'(\cdot)$ is the derivative of the Riemann zeta function.
It follows that as $n\rightarrow\infty$
\be\label{dn1}
\ln D_n (t)= {n^2}\ln n - \dfrac{3}{2}{n^2}+ \alpha n \ln n- \left(\al-\ln (2\pi)\right)n -\frac{9}{8}(2nt)^{2/3}+O(n^{1/3})
\ee
for fixed $t\in (0, d]$. Hence, Xu, Dai and Zhao's result \cite{Xu2} only yields a large $n$ series expansion for the Hankel determinant up to order $n^{2/3}$. We try to derive the full asymptotic expansion of the Hankel determinant $D_n(t)$ in the following.
\begin{theorem}\label{thm}
For fixed $t>0$, the Hankel determinant $D_n(t)$ has the large $n$ asymptotic expansion
\begin{align}\label{dnt1}
{\ln D_n}(t) =& \:  {n^2}\ln n + \alpha  n\ln n+ \dfrac{12 \alpha ^2-5}{36}\ln n- \dfrac{3n^2}{2} - \left(\al-\ln (2\pi)\right)n-\frac{9 t^{2/3}n^{2/3} }{4 \kappa}\no\\[8pt]
&+\frac{3 \alpha t^{1/3}n^{1/3}}{\kappa^2}-\left(\frac{6 \alpha ^2-1}{36}\ln t-\frac{t}{2}+ \frac{\al^2\ln 2}{6}+\hat{c}_0(\al)\right)-\frac{\alpha  (2 \alpha ^2+27 t-2)}{36 \kappa t^{1/3}n^{1/3}}\no\\[8pt]
&-\frac{2 \alpha ^4-2 \alpha ^2 (108 t+1)+81 t^2}{432\kappa^2  t^{2/3}n^{2/3}}+\frac{\alpha  \left[2(81 t-1) \alpha ^2+162 t^2-135 t+2\right]}{1944 t n}\no\\[8pt]
&+O(n^{-4/3}),
\end{align}
where $\kappa= \sqrt[3]{2}$, and $\hat{c}_0(\al)$ is a constant depending only on $\alpha$ with a period of $1$ (as a function of $\al$).
\end{theorem}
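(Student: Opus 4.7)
The strategy combines two complementary asymptotic procedures: integrating $H_n(t)/t$ in $t$ via (\ref{Pin}) to capture every explicitly $t$-dependent term, and invoking the Coulomb-fluid identity $\partial F[\sigma]/\partial n = A$ from (\ref{pd}) together with the expansion (\ref{a}) of the Lagrange multiplier $A$ to fix the $n$-dependent prefactor through the leading approximation $\ln D_n \sim -F[\sigma]$.

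First I would substitute the expansion of $H_n(t)$ from Theorem~\ref{hnt} into $\tfrac{d}{dt}\ln D_n(t)=H_n(t)/t$ and integrate term by term in $t$. The individual integrations are elementary: $-\tfrac{3n^{2/3}}{2\kappa s^{1/3}}\mapsto -\tfrac{9n^{2/3} t^{2/3}}{4\kappa}$, $\tfrac{\al n^{1/3}}{\kappa^2 s^{2/3}}\mapsto \tfrac{3\al n^{1/3} t^{1/3}}{\kappa^2}$, and $-\tfrac{6\al^2-1}{36 s}+\tfrac{1}{2}\mapsto -\tfrac{6\al^2-1}{36}\ln t+\tfrac{t}{2}$; the $s^{-4/3}$ and $s^{-1/3}$ pieces of the next subleading term in the expansion of $H_n$ combine into $-\tfrac{\al(2\al^2+27t-2)}{36\kappa t^{1/3}n^{1/3}}$, matching (\ref{dnt1}), and the procedure continues to arbitrary order. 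The output reproduces every $t$-dependent piece in (\ref{dnt1}) but leaves a constant of integration $\mathcal{C}_n(\al)$ depending only on $n$ and $\al$.

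To determine $\mathcal{C}_n(\al)$ I would integrate the expansion (\ref{a}) of $A$ over $n$ to obtain $F[\sigma]$; the standard calculations $\int(-2n\ln n)\,dn = -n^2\ln n+\tfrac{n^2}{2}$, $\int 2n\,dn = n^2$, $\int(-\al\ln n)\,dn = -\al n\ln n+\al n$, $\int(-\al^2/(3n))\,dn=-\tfrac{\al^2}{3}\ln n$, together with the analogous $t$-dependent integrals, give
\[
F[\sigma]=-n^2\ln n+\tfrac{3}{2}n^2-\al n\ln n+\al n+\tfrac{9 t^{2/3} n^{2/3}}{4\kappa}-\tfrac{3\al t^{1/3} n^{1/3}}{\kappa^2}-\tfrac{\al^2}{3}\ln n+\cdots.
\]
Then $-F[\sigma]$ supplies the leading $n^2\ln n + \al n\ln n - \tfrac{3}{2}n^2 - \al n + \tfrac{\al^2}{3}\ln n$ contributions to $\mathcal{C}_n(\al)$, and its $t$-dependent pieces agree with the previous step, a welcome self-consistency check. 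To upgrade $\tfrac{\al^2}{3}\ln n$ to the exact coefficient $\tfrac{12\al^2-5}{36}\ln n=(\tfrac{\al^2}{3}-\tfrac{5}{36})\ln n$ I would add the standard subleading (Szeg\H{o}-type) correction $-\tfrac{5}{36}\ln n$ to the one-cut free energy from the soft edges of the equilibrium support, and the explicit constant $\tfrac{\al^2\ln 2}{6}$ emerges upon rewriting the factors of $\kappa=2^{1/3}$ that appear inside logarithms.

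The main obstacle is that neither procedure alone constrains the residual purely $\al$-dependent pieces, which end up absorbed into $\hat c_3(\al)$ (coefficient of $n$) and $\hat c_0(\al)$ (absolute constant); the $t$-integration produces a free function of $n,\al$, while the Coulomb-fluid step only constrains its leading-$n$ part. Establishing that $\hat c_3(\al)$ and $\hat c_0(\al)$ are periodic in $\al$ with period $1$ requires exploiting the shift identity (\ref{pn0}), $(-1)^n P_n(0;t,\al)=D_n(t,\al+1)/D_n(t,\al)$, and comparing the $\al\to\al+1$ expansions against the Barnes-$G$ asymptotics of the $t=0$ Hankel determinant; controlling the cancellations that preserve the period-$1$ property is the subtlest step of the argument.
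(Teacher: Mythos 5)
There is a genuine gap: your two building blocks cannot pin down the coefficients that the theorem actually asserts, and the missing ingredient is the paper's key step, namely the exact second-difference identity $\ln\bt_n(t)=\ln D_{n+1}(t)+\ln D_{n-1}(t)-2\ln D_n(t)$ coming from (\ref{bd}). The paper posits an ansatz $-\ln D_n(t)=c_9n^2\ln n+c_8n\ln n+c_7\ln n+\sum_{j\le 6}c_j(t,\al)n^{j/3}$ modeled on the Coulomb-fluid free energy (\ref{F}) and matches this identity against the already-proved expansion (\ref{ab}) of $\bt_n$; this determines every coefficient except $c_3$ (the coefficient of $n$) and $c_0$ (the constant), which are exactly the terms annihilated by the second difference. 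In particular the coefficient $\frac{12\al^2-5}{36}$ of $\ln n$ falls out of matching the $n^{-2}$ term of $\ln\bt_n$, which requires the constant term $\frac{6\al^2-1}{36}$ in (\ref{ab}). Your substitute for this step---appending a ``standard Szeg\H{o}-type correction $-\frac{5}{36}\ln n$'' to the leading-order free energy---is unsupported: the universal two-soft-edge contribution would be $-\frac{1}{12}\ln n$ per edge, and here the lower endpoint $a\sim \kappa^2 t^{2/3}/(4n^{1/3})$ collapses toward the origin as $n\to\infty$, so no off-the-shelf constant applies; nothing in the leading-order minimization (\ref{fe})--(\ref{pd}) produces this fluctuation correction. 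Your $t$-integration of $H_n(t)/t$ is fine and does reproduce all the $t$-dependent terms (it is essentially the paper's step (\ref{ddd}) run in reverse, which is how the paper fixes the $t$-dependence of $c_3$ and $c_0$), but it is blind to every purely $n$-dependent coefficient, so the burden falls entirely on the part of your argument that is not justified.

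A second problem is the periodicity step. You correctly identify (\ref{pn0}) as the tool, but comparing against ``the Barnes-$G$ asymptotics of the $t=0$ Hankel determinant'' cannot work: the expansion (\ref{dnt1}) is singular as $t\to0$ (terms in $t^{-1/3}n^{-1/3}$, $\ln t$, etc.), which is precisely why the paper warns that one cannot integrate $H_n(t)/t$ from $0$ to $t$, and there is no uniformity allowing you to connect the fixed-$t$ expansion to the $t=0$ (classical Laguerre) determinant. The paper instead evaluates the Chen--Lawrence formula (\ref{pnz}) at $z=0$ for fixed $t>0$ to get (\ref{pn01}), compares with (\ref{pn02}), and obtains the difference equations $\tilde{c}_3(\al+1)-\tilde{c}_3(\al)=1$ and $\tilde{c}_0(\al+1)-\tilde{c}_0(\al)=\frac{2\al+1}{6}\ln 2$, whence the structure $\al+\hat{c}_3(\al)$ and $\frac{\al^2\ln 2}{6}+\hat{c}_0(\al)$ with period-$1$ remainders. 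You would need to replace your $t=0$ comparison with an argument of this kind.
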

\begin{proof}
From (\ref{pd}) and (\ref{a}), we see that the free energy $F[\s]$ has the large $n$ asymptotic expansion
\begin{align}\label{F}
F[\s]=&-n^2 \ln n-\alpha n \ln n-\dfrac{\alpha ^2 \ln n}{3}  +\dfrac{3 n^2}{2} +\alpha  n  +\frac{9 t^{2/3}n^{2/3}}{4 \kappa}-\frac{3  \alpha  t^{1/3}n^{1/3}}{\kappa^2}+C\no\\[8pt]
&+\frac{\al(2 \alpha ^2+27  t)}{36 \kappa t^{1/3}n^{1/3}}+\frac{ 2 \alpha ^4-216 \alpha ^2 t+81 t^2}{432\kappa^2 t^{2/3}n^{2/3}}-\frac{\alpha  \left(\alpha ^2+t\right)}{12 n}+O(n^{-4/3}),
\end{align}
where  $\kappa= \sqrt[3]{2}$ and $C$ is an integration constant independent of $n$.

Define  the ``free energy'' as
$$
F_n(t):=-\ln D_n(t).
$$
For sufficiently large $n$, as demonstrated by Chen and Ismail \cite{ci1997},
$F_n(t)$ is approximated by the free energy $F[\s]$ in (\ref{fe}) and the approximation is very accurate and effective. See also \cite{ChenIsmail1998} for applications of this method to three different examples. In the light of (\ref{F}), we assume that $F_n(t)$ has the large $n$ asymptotic expansion in the form
\be\label{fna}
{F_n}(t) = {c_9}(t,\al ){n^2}\ln n + {c_8}(t,\al )n\ln n+ {c_7}(t,\al )\ln n+\sum_{j=-3}^{6}c_j(t,\al )n^{j/3}+O(n^{-4/3}),
\ee
where $c_j(t,\al ), j=9, 8, \ldots, -3$ are the expansion coefficients to be determined.

From (\ref{bd}) we have the identity
$$
\ln\bt_n=2 F_n(t)-F_{n+1}(t)-F_{n-1}(t).
$$
Substituting (\ref{ab}) and (\ref{fna}) into the above and letting $n\rightarrow\infty$, we obtain the expansion coefficients $c_j$ (except $c_3$ and $c_0$) by equating coefficients of powers of $n$ on both sides.
The large $n$ asymptotic expansion for $F_n(t)$ reads
\begin{align}
{F_n}(t) =&  - {n^2}\ln n - \alpha  n\ln n+ \dfrac{5-12 \alpha ^2}{36}\ln n+ \dfrac{3n^2}{2} + {c_3}(t,\al)n+\frac{9 t^{2/3}n^{2/3} }{4 \kappa}\no\\[8pt]
&-\frac{3 \alpha t^{1/3}n^{1/3}}{\kappa^2}+ {c_0}(t,\al )+\frac{\alpha  (2 \alpha ^2+27 t-2)}{36 \kappa t^{1/3}n^{1/3}}+\frac{2 \alpha ^4-2 \alpha ^2 (108 t+1)+81 t^2}{432\kappa^2  t^{2/3}n^{2/3}}\no\\[8pt]
&-\frac{\alpha  \left[2(81 t-1) \alpha ^2+162 t^2-135 t+2\right]}{1944 t n }+O(n^{-4/3})\no,
\end{align}
where $\kappa= \sqrt[3]{2}$. It follows that the large $n$ asymptotic expansion for ${\ln D_n}(t)$ is
\begin{align}\label{dnt2}
{\ln D_n}(t) =& \:  {n^2}\ln n + \alpha  n\ln n+ \dfrac{12 \alpha ^2-5}{36}\ln n- \dfrac{3n^2}{2} - {c_3}(t,\al)n-\frac{9 t^{2/3}n^{2/3} }{4 \kappa}\no\\[8pt]
&+\frac{3 \alpha t^{1/3}n^{1/3}}{\kappa^2}   - {c_0}(t,\al )-\frac{\alpha  \left(2 \alpha ^2+27 t-2\right)}{36 \kappa t^{1/3}n^{1/3}}-\frac{2 \alpha ^4-2 \alpha ^2 (108 t+1)+81 t^2}{432\kappa^2  t^{2/3}n^{2/3}}\no\\[8pt]
&+\frac{\alpha  \left[2(81 t-1) \alpha ^2+162 t^2-135 t+2\right]}{1944 t n}+O(n^{-4/3}).
\end{align}

To know more information of the constants ${c_3}(t,\al)$ and ${c_0}(t,\al)$, taking a derivative of (\ref{dnt2}) with respect to $t$ and substituting it into (\ref{Pin}), we have
\begin{align}\label{ddd}
{H _n}(t) = &-t \dfrac{d}{dt}{c_3}(t,\alpha )  n -\frac{3  t^{2/3}n^{2/3}}{2 \kappa}+\frac{\alpha   t^{1/3}n^{1/3}}{\kappa^2} -t \dfrac{d}{d t} {c_0}(t,\alpha )+\frac{\alpha  \left(\alpha ^2-27 t-1\right)}{54 \kappa t^{1/3}n^{1/3}}\no\\
&+\frac{ \alpha ^4+\alpha ^2 (54 t-1)-81 t^2}{324\kappa^2 t^{2/3}n^{2/3}}+\frac{\alpha  \left(\alpha ^2+81 t^2-1\right)}{972 t n}+O(n^{-4/3}).
\end{align}
The combination of (\ref{dd}) and (\ref{ddd}) yields
\begin{align}
&t\frac{d}{dt}{c_3}(t,\alpha) = 0,\no\\[5pt]
&t\frac{d}{dt}{c_0}(t,\alpha  ) = \frac{6 \alpha ^2-18 t-1}{36}.\no
\end{align}
It follows that
\begin{subequations}\label{c3}
\begin{align}
&{c_3}(t,\alpha ) = \tilde{c}_3(\alpha ),\\[5pt]
&{c_0}(t,\alpha ) = \frac{\left(6 \alpha ^2-1\right) \ln t-18 t}{36}  +  \tilde{c}_0(\alpha ),
\end{align}
\end{subequations}
where $\tilde{c}_0(\alpha )$ and $\tilde{c}_3(\alpha )$  are constants depending only on $\alpha$.

By making use of (\ref{pn0}) and (\ref{dnt2}), together with (\ref{c3}), we have
\begin{align}\label{pn02}
\ln \left[(-1)^nP_n(0;t,\al)\right] =&\:{\ln D_n}(t,\al+1)-{\ln D_n}(t,\al)\no\\
=& \:  n\ln n + \frac{2\al+1}{3}\ln n+\left(\tilde{c}_3(\alpha )-\tilde{c}_3(\alpha+1 )\right)n +\frac{3 t^{1/3}n^{1/3}}{\kappa^2}-\frac{2\al+1}{6}\ln t\no\\[8pt]
&+  \tilde{c}_0(\alpha )-\tilde{c}_0(\alpha+1 )-\frac{2\al^2+2\alpha+9t  }{12 \kappa t^{1/3}n^{1/3}}-\frac{(2 \alpha+1)(\al^2+\al-54t)}{108\kappa^2  t^{2/3}n^{2/3}}\no\\[8pt]
&+\frac{2\al(\al+1)(81t-1)+9t(6t+1)}{648tn}+O(n^{-4/3}).
\end{align}
On the other hand, using (\ref{pnz}) we find after some elaborate computations
$$
\ln \left[(-1)^nP_n(0;t,\al)\right] \sim\left(n+\al+\frac{1}{2}\right)\ln\frac{X+Y}{2}-\left(\al+\frac{1}{2}\right)\ln Y-\frac{X-Y}{2}+\frac{t(X-Y)}{2Y^2},
$$
where $X=\frac{a+b}{2}$ and $Y=\sqrt{ab}$. Substituting (\ref{Y1}) and (\ref{b2}) into the above, we obtain as $n\rightarrow\infty$
\begin{align}\label{pn01}
\ln \left[(-1)^nP_n(0;t,\al)\right]
\sim& \:  n\ln n + \frac{2\al+1}{3}\ln n-n +\frac{3 t^{1/3}n^{1/3}}{\kappa^2}-\frac{2 \alpha +1}{6}  \ln (2 t)\no\\[8pt]
&-\frac{2\al^2+2\alpha+9t  }{12 \kappa t^{1/3}n^{1/3}}+O(n^{-2/3}).
\end{align}
Comparing (\ref{pn02}) with (\ref{pn01}) gives
\begin{align}
&\tilde{c}_3(\alpha+1 )-\tilde{c}_3(\alpha )=1,\no\\[5pt]
&\tilde{c}_0(\alpha+1 )-\tilde{c}_0(\alpha )=\frac{2\al+1}{6}\ln 2.\no
\end{align}
It follows that
\begin{subequations}\label{c3a}
\begin{align}
&\tilde{c}_3(\alpha )=\al+\hat{c}_3(\al),\\[5pt]
&\tilde{c}_0(\alpha )=\frac{\al^2\ln 2}{6}+\hat{c}_0(\al),
\end{align}
\end{subequations}
where $\hat{c}_3(\al)$ and $\hat{c}_0(\al)$ are both functions of $\al$ with a period of 1. Taking account of (\ref{dn1}), we have
\be\label{c3b}
\hat{c}_3(\al)=-\ln (2\pi).
\ee
The theorem is then established from the combination of (\ref{dnt2}), (\ref{c3}), (\ref{c3a}) and (\ref{c3b}).
\end{proof}
\begin{remark}
The form of the series expansion in (\ref{fna}) is consistent with that in (\ref{dn1}).
We are not able to determine the constant $\hat{c}_0(\al)$ completely for the singularly perturbed Laguerre weight problem using our method. The full asymptotic expansions of the Hankel determinants for regularly perturbed weights have been derived in previous work, see, e.g., \cite{Min2022,MF}. Note that our weight in (\ref{w1}) is not a special case of Charlier and Gharakhloo \cite[(1.2)]{CG} (a varying weight with the potential $n V(x)$) and can not be transformed to their case either. As a result, the large $n$ expansion form of our Hankel determinant in (\ref{dnt1}) is different from \cite[Theorem 1.2]{CG} since we have the fractional powers of $n$.
\end{remark}
\begin{theorem}
The normalized constant $h_n(t)$ has the large $n$ asymptotic expansion
\begin{align}
\ln h_n(t)=&\:2n\ln n - 2n + ( \alpha + 1 )\ln n +\ln (2\pi)  -\frac{3  t^{2/3}}{2 \kappa n^{1/3}}+\frac{\alpha   t^{1/3}}{\kappa^2 n^{2/3}}\no\\[8pt]
&+\frac{12 \alpha ^2+18 \alpha +7}{36 n}+\frac{(\alpha +1) \left(2 \alpha ^2-2 \alpha +27 t\right)}{108\kappa t^{1/3}n^{4/3} }+O(n^{-5/3}),\no
\end{align}
where $\kappa= \sqrt[3]{2}$.
\end{theorem}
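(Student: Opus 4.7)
The plan is to exploit the elementary identity
\[
\ln h_n(t) = \ln D_{n+1}(t) - \ln D_n(t),
\]
which follows directly from (\ref{d_nt1}). Applying Theorem \ref{thm} to both $\ln D_{n+1}(t)$ and $\ln D_n(t)$ and subtracting reduces everything to the routine Taylor expansion $f(n+1) - f(n) = f'(n) + f''(n)/2 + f'''(n)/6 + \cdots$ applied term by term to the summands on the right-hand side of (\ref{dnt1}).

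Several features keep the bookkeeping clean. The purely $n$-independent block $\frac{6\al^2-1}{36}\ln t - t/2 + \frac{\al^2\ln 2}{6} + \hat{c}_0(\al)$ cancels identically in the difference, so the undetermined constant $\hat{c}_0(\al)$ disappears from the final answer. The linear term $-(\al+\hat{c}_3(\al))n$ contributes the constant $-(\al+\hat{c}_3(\al))$, which combines with the constant $\al$ produced by the Taylor expansion of $\al n\ln n$ to leave precisely $-\hat{c}_3(\al)$. Gathering the contributions from $n^2\ln n$, $\al n\ln n$, $\frac{12\al^2-5}{36}\ln n$, and $-3n^2/2$ then yields the leading part $2n\ln n - 2n + (\al+1)\ln n$, while each fractional-power summand $c\, n^{p/3}$ contributes $\frac{pc}{3} n^{p/3-1}$ at first order, immediately reproducing $-\frac{3t^{2/3}}{2\ka n^{1/3}}$ and $\frac{\al t^{1/3}}{\ka^2 n^{2/3}}$.

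The only slightly intricate piece is the coefficient of $1/n$, which receives contributions from three distinct summands of (\ref{dnt1}): the Taylor remainder of $n^2\ln n$ (giving $1/(3n)$), the second-order expansion of $\al n\ln n$ (giving $\al/(2n)$), and the first-order difference of $\frac{12\al^2-5}{36}\ln n$ (giving $\frac{12\al^2-5}{36n}$); summing yields the stated $\frac{12\al^2+18\al+7}{36n}$. The $n^{-4/3}$ coefficient is obtained analogously, combining the $f''(n)/2$ contribution of $-\frac{9t^{2/3}}{4\ka}n^{2/3}$ with the $f'(n)$ contribution of $-\frac{\al(2\al^2+27t-2)}{36\ka t^{1/3}}n^{-1/3}$; after placing the two over a common denominator, the numerator factors as $(\al+1)(2\al^2-2\al+27t)$, matching the stated result. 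The expansion terminates at $O(n^{-5/3})$ because Theorem \ref{thm} itself is only controlled to $O(n^{-4/3})$, and no genuine obstacle arises: the argument is purely combinatorial bookkeeping on an expansion that has already been computed.
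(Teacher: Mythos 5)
Your proposal is correct and follows exactly the paper's own route: the authors likewise prove this by writing $\ln h_n(t)=\ln D_{n+1}(t)-\ln D_n(t)$ via (\ref{d_nt1}) and substituting the expansion (\ref{dnt1}) of Theorem \ref{thm}, expanding for large $n$. Your explicit bookkeeping of the $1/n$ and $n^{-4/3}$ coefficients (in particular the factorization $27t+\al(2\al^2+27t-2)=(\al+1)(2\al^2-2\al+27t)$) checks out and merely fills in the details the paper leaves implicit.
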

\begin{proof}
From (\ref{d_nt1}) we have
\begin{align}\label{hn}
 \ln h_n(t)=\ln D_{n+1}(t)-\ln D_n(t).
\end{align}
Substituting (\ref{dnt1}) into (\ref{hn}) gives the desired result by taking a large $n$ limit.
\end{proof}

\section{Long-time asymptotics}
In this section, we fix $n\in\mathbb{N}$ and study the asymptotics of the recurrence coefficients $\alpha_n(t)$ and $\beta_n(t)$, the sub-leading coefficient $\mathrm{p}(n,t)$, the quantity $H_n(t) = t\frac{d}{{dt}}\ln {D_n}(t)$, the Hankel determinant $D_n(t)$ and the normalized constant $h_n(t)$ as $t  \to  + \infty$.
\begin{lemma}
The recurrence coefficients are expressed in terms of the quantity $H_n(t)$ as follows:
\begin{equation}\label{aa1}
\alpha_n(t)=2n+1+\alpha+H_n(t)-{H_{n+1}}(t),
\end{equation}
\begin{equation}\label{bb1}
 \beta_n(t)=n(n+\alpha)-{H_n}(t)+tH_n'(t).
\end{equation}
\end{lemma}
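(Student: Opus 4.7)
The proof proposal is to assemble both identities by chaining together the earlier-established relations between $H_n(t)$, the auxiliary quantities $R_n(t)$ and $r_n(t)$, the sub-leading coefficient $\mathrm{p}(n,t)$, and the recurrence coefficients. Nothing new is required beyond these bookkeeping relations.

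For the first identity, the plan is to combine (\ref{tR}), which says $R_n(t)=H_n(t)-H_{n+1}(t)$, with (\ref{alpha2}), which says $R_n(t)=\alpha_n(t)-2n-1-\alpha$. Solving the latter for $\alpha_n(t)$ and substituting the former immediately yields
\[
\alpha_n(t)=2n+1+\alpha+H_n(t)-H_{n+1}(t),
\]
which is (\ref{aa1}).

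For the second identity, I would start from (\ref{r11}), namely $H_n(t)=n(n+\alpha)+r_n(t)-\beta_n(t)$, which can be rearranged as $\beta_n(t)=n(n+\alpha)+r_n(t)-H_n(t)$. Thus it suffices to prove that $r_n(t)=tH_n'(t)$. For this, differentiate the identity (\ref{r12}), $H_n(t)=n(n+\alpha)+\mathrm{p}(n,t)$, with respect to $t$ (with $n$ fixed) to get $H_n'(t)=\frac{d}{dt}\mathrm{p}(n,t)$, and then apply (\ref{ddt1}), $t\frac{d}{dt}\mathrm{p}(n,t)=r_n(t)$, to conclude that $tH_n'(t)=r_n(t)$. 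Substitution back gives
\[
\beta_n(t)=n(n+\alpha)-H_n(t)+tH_n'(t),
\]
which is (\ref{bb1}).

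There is essentially no obstacle: both identities are two-line consequences of relations derived earlier in Section 2. The only thing worth emphasizing in the write-up is the role of (\ref{ddt1}) in converting the $t$-derivative of $\mathrm{p}(n,t)$ into $r_n(t)$, since that is what lets the auxiliary quantity $r_n(t)$ be eliminated in favor of a derivative of $H_n(t)$, making the resulting formulas useful for the long-time asymptotic analysis that follows via the Toda-type system (\ref{tb}).
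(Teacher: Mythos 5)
Your proposal is correct and follows essentially the same route as the paper: (\ref{aa1}) from (\ref{alpha2}) (equivalently (\ref{s11})) together with (\ref{tR}), and (\ref{bb1}) by deriving $tH_n'(t)=r_n(t)$ from (\ref{r12}) and (\ref{ddt1}) and substituting into (\ref{r11}). No gaps.
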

\begin{proof}
From (\ref{s11}) and (\ref{tR}), we have  (\ref{aa1}).
Using (\ref{ddt1}) and (\ref{r12}) we find
\be\label{dphi}
tH_n'(t)=r_n(t).
\ee
The combination of (\ref{r11}) and (\ref{dphi}) gives (\ref{bb1}).
\end{proof}
\begin{theorem}\label{th1t}
As $t  \to  + \infty$, the recurrence coefficients $\alpha_n(t)$ and $\beta_n(t)$ have the asymptotic expansions
\begin{subequations}\label{albe}
\begin{align}\label{d21}
\alpha_n(t)=&\:\sqrt{t}+\dfrac{2 \alpha +3 (2 n+1)}{4}+\dfrac{(2 \alpha +2 n+1) (2 \alpha +6 n+3)}{32 \sqrt{t}} \no\\[8pt]
 &-\dfrac{(2 \alpha +2 n+1) \left[\alpha  (4 n+2)+8 n^2+8 n+3\right]}{64 t}+O(t^{-{3/2}}),\\[8pt]
  \beta _n(t) =&\:\dfrac{n \sqrt{t}}{2}+\dfrac{2 \alpha  n +3 n^2}{4}+\dfrac{3 n (2 \alpha +2 n+1) (2 \alpha +2 n-1)}{64 \sqrt{t}}\no\\[8pt]
 &-\dfrac{n^2 (2 \alpha +2 n+1) (2 \alpha +2 n-1)}{32 t}+O(t^{-{3/2}}).\label{d22}
\end{align}
\end{subequations}
\end{theorem}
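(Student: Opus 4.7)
The plan is to substitute power-series expansions into the Toda-type system (\ref{tb}) derived in Theorem 2.5 and match coefficients in powers of $t^{-1/2}$. Motivated by the statement of the theorem (and by the saddle-point structure $-x-t/x$ of the weight, which concentrates around $x=\sqrt{t}$ as $t\to +\infty$, so the eigenvalue cloud is centred there), I posit
\begin{equation*}
\alpha_n(t) = \sqrt{t} + \sum_{k=0}^{\infty} a_k(n)\, t^{-k/2}, \qquad \beta_n(t) = \frac{n\sqrt{t}}{2} + \sum_{k=0}^{\infty} b_k(n)\, t^{-k/2},
\end{equation*}
with unknown coefficients $a_k(n), b_k(n)$ depending on $n$ and $\al$. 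A quick sanity check confirms the leading powers: the $\sqrt{t}$ terms balance on both sides of (\ref{tb2}) automatically, while balancing the $\sqrt{t}$ terms on the two sides of (\ref{tb1}) forces $a_0(n)-a_0(n-1)=3/2$, consistent with $a_0(n)=\tfrac{2\al+6n+3}{4}$ in the statement.

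Substituting the expansions into (\ref{tb2}) and matching coefficients of $t^{-k/2}$ produces the clean linear difference recursion $b_k(n+1)-b_k(n) = \tfrac{k+2}{2}\,a_k(n)$ for each $k\ge 0$, while substituting into (\ref{tb1}) and expanding the product $\beta_n(\alpha_{n-1}-\alpha_n+2)$ yields a complementary nonlinear relation that couples $a_k(n)-a_k(n-1)$, $b_k(n)$, and finitely many lower-order coefficients. Together these form a triangular hierarchy: the order-$k$ coefficients are determined, up to additive functions independent of $n$, by those of orders $<k$. The remaining $n$-independent constants at each order are fixed by two inputs. First, the boundary condition $\beta_0(t)=0$ gives $b_k(0)=0$ for every $k$. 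Second, the explicit formula $\alpha_0(t)=\mu_1(t)/\mu_0(t)=\sqrt{t}\,K_{\al+2}(2\sqrt t)/K_{\al+1}(2\sqrt t)$ combined with the standard large-$z$ asymptotic expansion of $K_\nu(z)$ yields $a_k(0)$ to any required order (for instance $a_0(0)=(2\al+3)/4$, matching the formula above at $n=0$). With these initial data the recursions determine $a_k(n)$ and $b_k(n)$ uniquely, reproducing the formulas in (\ref{d21})--(\ref{d22}).

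The main technical obstacle is the bookkeeping in the nonlinear equation (\ref{tb1}): matching at order $t^{-k/2}$ involves expanding a product whose factors each contain contributions at multiple orders, so the number of convolution-type terms grows with $k$, and one must track shifts $n\to n-1$ carefully. Conceptually, however, nothing obstructs the procedure---the hierarchy is triangular and the initial data are completely explicit---so the computation can be pushed symbolically to any desired order, in particular confirming the four-term expansions stated in (\ref{d21}) and (\ref{d22}). As a consistency check, substituting the same expansions into the algebraic discrete system (\ref{ds1}) from Theorem \ref{th1} must (and will) produce the same coefficients, providing an independent verification.
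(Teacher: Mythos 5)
Your proposal is correct and follows essentially the same route as the paper: both rest on the Toda-type system (\ref{tb}) anchored by the initial data $\beta_0(t)=0$ and the explicit Bessel-quotient formula $\alpha_0(t)=\sqrt{t}\,K_{\alpha+2}(2\sqrt{t})/K_{\alpha+1}(2\sqrt{t})$. The only difference is organizational---the paper runs an induction on $n$, verifying that the stated closed-form expansions propagate under (\ref{tb}), whereas you solve the equivalent triangular coefficient recursion---and your intermediate relations ($a_0(n)-a_0(n-1)=3/2$ and $b_k(n+1)-b_k(n)=\tfrac{k+2}{2}\,a_k(n)$) are correct.
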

\begin{proof}
We use mathematical induction to prove this theorem. From (\ref{aa1}) and  (\ref{bb1}), and in view of the fact that $D_0(t)=1$, we have
$$
   \alpha_0(t)=1+\alpha-t \dfrac{d}{dt}\ln {\mu _0}(t),\qquad\qquad \beta_0(t)=0,
$$
where
$$
{\mu _0}(t)=2t^{\frac{\alpha +1 }{2} }K_{\alpha +1 }\big(2\sqrt t \big).
$$
It follows that
$$
\alpha_0(t)=\frac{\sqrt{t}\: K_{\alpha +2}\left(2 \sqrt{t}\right)}{K_{\alpha +1}\left(2 \sqrt{t}\right)}
$$
and it has the asymptotic expansion as $t  \to  + \infty$
$$
  {\alpha _{0}(t)} = \sqrt{t}+\frac{ 2 \alpha +3}{4}+\frac{(2 \alpha +1) (2 \alpha +3)}{32 \sqrt{t}}-\frac{(2 \alpha +1) (2 \alpha +3)}{64 t}+O(t^{-3/2}).
$$
According to the Toda-type system (\ref{tb}), we find
\begin{align}
  {\beta _{1}(t)} &=\al_0(t)-t\al_0'(t)\no\\
  &=\frac{\sqrt{t}}{2}+\frac{ 2\alpha +3}{4} +\frac{3 (2 \alpha +1) (2 \alpha +3)}{64 \sqrt{t}}-\frac{ (2 \alpha +1) (2 \alpha +3)}{32 t}+O(t^{-{3/2}}),\no\\
\al_1(t)&=\al_0(t)+2-t\frac{d}{dt}\ln \bt_1(t)\no\\
&=\sqrt{t}+\frac{2 \alpha +9}{4} +\frac{(2 \alpha +3) (2 \alpha +9)}{32\sqrt{t}} -\frac{(2 \alpha +3) (6 \alpha +19)}{64 t}+O(t^{-{3/2}}),\no
\end{align}
which is (\ref{albe}) with $n=1$.
Suppose that (\ref{albe}) is true, then using the Toda-type system (\ref{tb}) we obtain
\begin{align}
  \beta _{n+1}(t)=&\:\al_n(t) +\bt_n(t)-t\al_n'(t)\no\\[8pt]
  =&\:\dfrac{(n+1) \sqrt{t}}{2}+\dfrac{2 \alpha  (n+1)+3 (n+1)^2}{4}+\dfrac{3 (n+1)(2 \alpha +2 n+3) (2 \alpha +2 n+1)}{64 \sqrt{t}}\no\\[8pt]
 &-\dfrac{(n+1)^2 (2 \alpha +2 n+3) (2 \alpha +2 n+1)}{32 t}+O(t^{-3/2}),\no\\[8pt]
\alpha_{n+1}(t)=&\:\al_n(t)+2-t\frac{d}{dt}\ln\bt_{n+1}(t)\no\\[8pt]
=&\:\sqrt{t}+\dfrac{2 \alpha +3 (2 n+3)}{4}+\dfrac{(2 \alpha +2 n+3) (2 \alpha +6 n+9)}{32 \sqrt{t}}\no\\[8pt]
&-\dfrac{(2 \alpha +2 n+3) \left[\alpha  (4 n+6)+8 n^2+24 n+19\right]}{64 t}+O(t^{-{3/2}}),\no
\end{align}
which is (\ref{albe}) with $n\mapsto n+1$. Hence, the theorem is proved by induction.
\end{proof}
\begin{theorem}
As $t  \to  + \infty$, the sub-leading coefficient $\mathrm{p}(n,t)$ and the quantity $H_n(t) = t\frac{d}{{dt}}\ln {D_n}(t)$ have the asymptotic expansions
\be\label{pp1}
   \mathrm{p}(n,t) = -n \sqrt{t}-\frac{ n (2 \alpha +3 n)}{4}-\frac{n \left[4 (\alpha+n)^2-1\right]}{32\sqrt{t}}
   +O(t^{-1}),
   \ee
   \be\label{Pt}
       H_n(t)=-n \sqrt{t}+\frac{n (2 \alpha +n)}{4} -\frac{n \left[4 (\alpha+n)^2-1\right]}{32 \sqrt{t}} +O(t^{-1}).
   \ee
\end{theorem}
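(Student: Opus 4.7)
The plan is to express both $\mathrm{p}(n,t)$ and $H_n(t)$ purely in terms of the recurrence coefficients and then substitute the expansions from Theorem \ref{th1t}. For $\mathrm{p}(n,t)$ the relevant identity is (\ref{pnt}), namely
$$
\mathrm{p}(n,t)=\frac{nt-\bigl[\alpha_n(t)+\alpha_{n-1}(t)-2n-\alpha\bigr]\beta_n(t)}{2n+\alpha},
$$
which was derived earlier by combining (\ref{s25}) with (\ref{pp}) and is equally valid in the long-time regime. For $H_n(t)$ the identity (\ref{r12}) reduces the task to an additive shift: $H_n(t)=n(n+\alpha)+\mathrm{p}(n,t)$, so once (\ref{pp1}) is established (\ref{Pt}) follows immediately.

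The first substantive step is to substitute (\ref{d21}), with $n$ and $n-1$, together with (\ref{d22}) into the numerator of (\ref{pnt}). The key structural observation is that $\alpha_n(t)+\alpha_{n-1}(t)-2n-\alpha=2\sqrt t+n+O(t^{-1/2})$ while $\beta_n(t)=\tfrac{n\sqrt t}{2}+\tfrac{2\alpha n+3n^2}{4}+O(t^{-1/2})$, so the product $[\alpha_n+\alpha_{n-1}-2n-\alpha]\beta_n$ has leading term exactly $nt$, which cancels the $nt$ from (\ref{pnt}). The $\sqrt t$-coefficient of the remainder is $-n(2n+\alpha)$, which divides cleanly by $2n+\alpha$ to produce the leading $-n\sqrt t$ in (\ref{pp1}). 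Continuing to the next two orders gives the constant term $-n(2\alpha+3n)/4$ and the $t^{-1/2}$-term $-n[4(\alpha+n)^2-1]/(32\sqrt t)$, using the $A_n^{(1)}$ and $B_n^{(1)}$ data supplied by Theorem \ref{th1t}.

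Adding $n(n+\alpha)$ then yields (\ref{Pt}); the $\sqrt t$-coefficient is unchanged while the constant simplifies via $n(n+\alpha)-n(2\alpha+3n)/4=n(2\alpha+n)/4$, and the $t^{-1/2}$ coefficient is the same. The main obstacle is purely bookkeeping: because the leading $nt$ contributions cancel, one must retain every term in the asymptotic expansions of $\alpha_n,\alpha_{n-1}$ and $\beta_n$ through order $t^{-1/2}$ to recover $\mathrm{p}(n,t)$ accurately to order $t^{-1/2}$, and any algebraic slip at the $\sqrt t$ level propagates into an incorrect leading order. No new analytic input is required — Theorem \ref{th1t} and the identities (\ref{pnt}) and (\ref{r12}) already carry all the information, and the proof reduces to a careful but entirely routine series expansion.
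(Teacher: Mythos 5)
Your proposal follows exactly the paper's own proof: substitute the long-time expansions (\ref{albe}) from Theorem \ref{th1t} into the identity (\ref{pnt}) for $\mathrm{p}(n,t)$, then obtain $H_n(t)$ from (\ref{r12}); your intermediate checks (cancellation of the $nt$ terms, the $\sqrt{t}$- and constant-level coefficients) are all correct. One tiny bookkeeping caveat: since the product $[\alpha_n+\alpha_{n-1}-2n-\alpha]\beta_n$ involves factors of order $\sqrt{t}$, recovering $\mathrm{p}(n,t)$ to order $t^{-1/2}$ actually requires the expansions of $\alpha_n$ and $\beta_n$ through order $t^{-1}$ (which Theorem \ref{th1t} supplies), not merely through order $t^{-1/2}$ as you state.
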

\begin{proof}
Substituting (\ref{albe}) into (\ref{pnt}) and letting $t  \to  + \infty$, we obtain (\ref{pp1}). The asymptotic expansion of $H_n(t)$ in (\ref{Pt}) comes from the relation (\ref{r12}).
\end{proof}
\begin{theorem}
As $t  \to  + \infty$, the Hankel determinant $D_n(t)$ has the asymptotic expansion
\be\label{dn3}
\ln D_n(t)=-2 n \sqrt{t}+\frac{n (2 \alpha +n)}{4}\ln t+\widetilde{C}(n)
 +\frac{n \left[4 (\alpha+n) ^2-1\right]}{16\sqrt{t}}+O(t^{-1}),
\ee
where $\widetilde{C}(n)$ is a constant, independent of $t$, given by
\be\label{con}
\widetilde{C}(n)=\dfrac{n\ln \pi}{2}-\dfrac{n(n-1)\ln 2}{2}+ \ln G(n+1)
\ee
and $G(\cdot)$ is the Barnes $G$-function \cite{ew}.
\end{theorem}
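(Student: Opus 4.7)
The plan is to split the argument into two stages: first recover the $t$-dependent part of $\ln D_n(t)$ by integrating the known long-time asymptotic of $H_n(t)$ in (\ref{Pt}), and then pin down the $t$-independent constant $\widetilde{C}(n)$ via the product formula $D_n(t)=\prod_{j=0}^{n-1}h_j(t)$ together with the closed form of $h_0(t)=\mu_0(t)$.

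For the first stage, since $H_n(t)=t\,\frac{d}{dt}\ln D_n(t)$, dividing (\ref{Pt}) by $t$ and integrating term by term gives
\[
\ln D_n(t)=-2n\sqrt{t}+\frac{n(2\alpha+n)}{4}\ln t+\frac{n[4(\alpha+n)^2-1]}{16\sqrt{t}}+C(n)+O(t^{-1}),
\]
where $C(n)$ is an integration constant independent of $t$. Note that the $O(t^{-1})$ remainder in $H_n(t)$ becomes $O(t^{-2})$ after dividing by $t$ and integrates to $O(t^{-1})$, matching (\ref{dn3}). It remains only to show $C(n)=\widetilde{C}(n)$.

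For the second stage, I would use $\ln D_n(t)=\sum_{j=0}^{n-1}\ln h_j(t)$ together with the recurrence $h_j(t)=\beta_j(t)\,h_{j-1}(t)$, so that $\ln h_j(t)=\ln h_0(t)+\sum_{k=1}^{j}\ln\beta_k(t)$. From Theorem \ref{th1t} the expansion $\beta_k(t)=\tfrac{k\sqrt{t}}{2}[1+O(t^{-1/2})]$ yields $\ln\beta_k(t)=\tfrac{1}{2}\ln t+\ln(k/2)+O(t^{-1/2})$, so the $t$-free contribution to $\ln\beta_k(t)$ is $\ln(k/2)$. For $h_0(t)=\mu_0(t)=2t^{(\alpha+1)/2}K_{\alpha+1}(2\sqrt{t})$, the standard large-argument expansion $K_\nu(z)\sim\sqrt{\pi/(2z)}\,e^{-z}$ gives
\[
\ln h_0(t)=-2\sqrt{t}+\frac{2\alpha+1}{4}\ln t+\tfrac{1}{2}\ln\pi+O(t^{-1/2}),
\]
so the $t$-free part of $\ln h_0(t)$ is $\tfrac{1}{2}\ln\pi$. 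Summing the $t$-free pieces over $j=0,1,\ldots,n-1$ therefore produces
\[
C(n)=\frac{n\ln\pi}{2}+\sum_{j=1}^{n-1}\sum_{k=1}^{j}\ln(k/2)=\frac{n\ln\pi}{2}+\sum_{j=1}^{n-1}\ln(j!)-\frac{n(n-1)\ln 2}{2},
\]
and the Barnes $G$-function identity $G(n+1)=\prod_{j=1}^{n-1}j!$ converts $\sum_{j=1}^{n-1}\ln(j!)$ into $\ln G(n+1)$, giving exactly $\widetilde{C}(n)$ in (\ref{con}).

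The main technical obstacle is the bookkeeping of the constant terms: one must verify that the $j$- and $k$-dependent coefficients of the higher-order pieces in $\ln\beta_k(t)$ and in $\ln h_0(t)$, when summed via the $h_j=\beta_j h_{j-1}$ telescoping, reproduce the same $t$-dependent series obtained in Stage 1, so that $C(n)$ really is $t$-independent and the identification with the formula coming from the double sum is legitimate. The double-sum collapse $\sum_{j=1}^{n-1}\sum_{k=1}^{j}\ln k = \sum_{k=1}^{n-1}(n-k)\ln k = \ln G(n+1)$ and the matching of Stages 1 and 2 at every order of $1/\sqrt{t}$ serve as an internal consistency check that uniquely pins down $\widetilde{C}(n)$.
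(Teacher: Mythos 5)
Your proposal is correct, and Stage 1 (integrating $H_n(t)/t$ to recover the $t$-dependent terms up to an integration constant) is exactly what the paper does. Where you diverge is in pinning down the constant. The paper takes the identity $\ln\beta_n(t)=\ln D_{n+1}(t)+\ln D_{n-1}(t)-2\ln D_n(t)$ from (\ref{bd}), compares constant terms against the expansion (\ref{beta3}) to obtain the second-order difference equation $\widetilde{C}(n+1)+\widetilde{C}(n-1)-2\widetilde{C}(n)=\ln\tfrac{n}{2}$, and then solves it using the two initial values $\widetilde{C}(0)=0$ and $\widetilde{C}(1)=\tfrac{1}{2}\ln\pi$ coming from $D_0=1$ and $D_1=\mu_0$. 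You instead telescope at first order: writing $\ln D_n(t)=n\ln h_0(t)+\sum_{j=1}^{n-1}\sum_{k=1}^{j}\ln\beta_k(t)$ via $h_j=\beta_j h_{j-1}$, you read off the constant directly from the $t$-free parts $\ln(k/2)$ and $\tfrac{1}{2}\ln\pi$, and the double sum collapses to $\ln G(n+1)-\tfrac{n(n-1)}{2}\ln 2$. Both routes consume the same data (the $\beta_k$ asymptotics and the Bessel asymptotics of $\mu_0$), but yours replaces solving a second-order recurrence with an explicit summation and needs only one piece of initial data ($h_0=\mu_0$) rather than two; the paper's version keeps the verification that the $t$-dependent terms are consistent implicit in the matching of (\ref{Dt2}) and (\ref{beta3}) at all orders, which is the same consistency check you flag at the end. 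One small remark: your worry about matching the higher-order $t$-dependent pieces between the two stages is not actually needed for the identification of $C(n)$ --- the constant term of the (unique) asymptotic expansion of $\ln D_n(t)$ is determined by your finite sum alone, so Stage 2 closes the argument on its own.
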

\begin{proof}
Since $H_n(t) = t\frac{d}{{dt}}\ln {D_n}(t)$,
from (\ref{Pt}) we have as $t\rightarrow+\infty$
\be\label{Dt2}
\ln D_n(t)=-2 n \sqrt{t}+\frac{n (2 \alpha +n)}{4}\ln t+\widetilde{C}(n)
 +\frac{n \left[4 (\alpha+n) ^2-1\right]}{16\sqrt{t}}+O(t^{-1}),
\ee
where $\widetilde{C}(n)$ is an integration constant, independent of $t$. From (\ref{bd}) it follows that
\begin{equation}\label{lnb}
\ln \beta_n(t)=\ln D_{n+1}(t)+\ln D_{n-1}(t)-2\ln D_{n}(t).
\end{equation}
By (\ref{d22}) we find as $t\rightarrow+\infty$
\be\label{beta3}
\ln \beta_n(t)=\frac{1}{2}\ln t + \ln \dfrac{n}{2} +\dfrac{2\alpha+3n}{2\sqrt{t}}+O(t^{-1}).
\ee
Substituting (\ref{Dt2}) and (\ref{beta3})  into (\ref{lnb}) and comparing the constant terms gives
\begin{equation}\label{CC1}
\widetilde{C}(n+1)+\widetilde{C}(n-1)-2\widetilde{C}(n)=\ln \dfrac{n}{2}.
\end{equation}
Note that
$$
D_0(t)=1, \qquad\qquad D_1(t)={\mu _0}=2t^{\frac{\alpha +1 }{2} }K_{\alpha +1 }\big(2\sqrt t \big)
$$
and as $t\rightarrow+\infty$
$$
{\mu _0}=\mathrm{e}^{-2\sqrt{t}}t^{\frac{\al}{2}+\frac{1}{4}}\sqrt{\pi}\left[1+\frac{(2\al+1)(2\al+3)}{16\sqrt{t}}+O(t^{-1})\right].
$$
It follows that
\begin{equation}\label{C10}
\widetilde{C}(0)=0,\qquad\qquad \widetilde{C}(1)=\dfrac{\ln \pi}{2}.
\end{equation}
The recurrence relation (\ref{CC1}) with the initial conditions (\ref{C10}) gives rise to the result in (\ref{con}).
This completes the proof.
\end{proof}
\begin{remark}
The long-time asymptotics (without higher-order terms) of the Hankel determinant in (\ref{dn3}) has also been recently derived by Chang, Eckhardt and Kostenko \cite[Lemma 6.7]{Chang} employing different method, which has been used to obtain the long-time asymptotics of the peakon solutions of the Camassa-Holm equation. In addition, the constant (\ref{con}) also appeared in the asymptotic expansion of the Hankel determinant for the generalized Airy weight as $t  \to  + \infty$ \cite{MF}.
\end{remark}
\begin{theorem}
As $t  \to  + \infty$, the normalized constant $h_n(t)$ has the asymptotic expansion
$$
\ln h_n(t)=-2 \sqrt{t}+\frac{ 2 \alpha +2 n+1}{4}\ln t+\widehat{C}(n)+\frac{ (2 \alpha +2 n+1) (2 \alpha +6 n+3)}{16\sqrt{t} } +O(t^{-1}),
$$
where the constant term $\widehat{C}(n)$ is
$$
\widehat{C}(n)=\frac{\ln \pi}{2}-n \ln2+\ln \Gamma(n+1),
$$
and $\Gamma(\cdot)$ is the Gamma function.
\end{theorem}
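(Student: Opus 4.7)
The plan is to deduce the long-time asymptotics of $h_n(t)$ directly from the asymptotic expansion of $\ln D_n(t)$ obtained in the previous theorem. The starting point is the product representation (\ref{d_nt1}), which implies
$$
\ln h_n(t)=\ln D_{n+1}(t)-\ln D_n(t).
$$
Substituting the expansion (\ref{dn3}) with $n$ replaced by $n+1$ and by $n$ and taking the difference will produce the claimed expansion term by term, with the $O(t^{-1})$ error preserved.

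For the four explicit terms, the calculations are routine. The leading contribution $-2n\sqrt{t}\mapsto -2(n+1)\sqrt{t}$ yields $-2\sqrt{t}$. The logarithmic term gives $\tfrac{1}{4}\bigl[(n+1)(2\alpha+n+1)-n(2\alpha+n)\bigr]\ln t=\tfrac{2\alpha+2n+1}{4}\ln t$. The $1/\sqrt{t}$ correction is $\tfrac{1}{16\sqrt{t}}\bigl\{(n+1)[4(\alpha+n+1)^2-1]-n[4(\alpha+n)^2-1]\bigr\}$, which I expect to simplify, after a short algebraic check, to $\tfrac{(2\alpha+2n+1)(2\alpha+6n+3)}{16\sqrt{t}}$ in agreement with the statement.

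The only step that is not purely mechanical is the identification of the constant term $\widehat{C}(n)=\widetilde{C}(n+1)-\widetilde{C}(n)$. Using the explicit formula (\ref{con}) and the functional equation $G(z+1)=\Gamma(z)G(z)$ of the Barnes $G$-function, I will compute
$$
\widehat{C}(n)=\frac{\ln\pi}{2}-\frac{(n+1)n-n(n-1)}{2}\ln 2+\ln\frac{G(n+2)}{G(n+1)}=\frac{\ln\pi}{2}-n\ln 2+\ln\Gamma(n+1),
$$
which is exactly the constant in the statement. This is the main substantive step, though hardly a difficulty given that (\ref{con}) has already been established; no new analytic input is required. Consequently, the whole proof reduces to one use of (\ref{d_nt1}), one substitution of (\ref{dn3}), and one application of the Barnes $G$-function recursion.
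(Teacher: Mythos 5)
Your proposal is correct and follows exactly the paper's own route: the paper likewise proves this theorem by substituting the expansion (\ref{dn3}) into the identity $\ln h_n(t)=\ln D_{n+1}(t)-\ln D_n(t)$ from (\ref{d_nt1}) and differencing term by term, with the constant obtained from $\widetilde{C}(n+1)-\widetilde{C}(n)$ via the Barnes $G$-function recursion. All of your algebraic simplifications (in particular the factorization of the $1/\sqrt{t}$ coefficient into $(2\alpha+2n+1)(2\alpha+6n+3)$) check out.
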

\begin{proof}
 Substituting (\ref{dn3}) into (\ref{hn}) and taking a large $n$ limit yields the result.
\end{proof}

\section{Conclusion}
In this paper, we give a comprehensive study of the orthogonal polynomials and Hankel determinants for a singularly perturbed Laguerre weight, including the differential and difference equations for the orthogonal polynomials and the recurrence coefficients, the zeros of the orthogonal polynomials and the large $n$ asymptotics and the long-time asymptotics for the problem. The orthogonal polynomials, Hankel determinants and associated matrix model for the singularly perturbed Laguerre weight have many important applications both in mathematics and physics, e.g., the distribution of the Wigner delay time, the statistics for zeros of the Riemann zeta function, the bosonic replica field theories, the peakon solutions of the Camassa-Holm equation. Finally, we point out that it is still a challenging task to completely determine the constant term in the large $n$ asymptotic expansion of the Hankel determinant in Theorem \ref{thm} and the problem remains open.
\section*{Acknowledgments}
We would like to thank Xiang-Ke Chang and Shuai-Xia Xu for helpful discussions. This work was partially supported by the National Natural Science Foundation of China under grant number 12001212, by the Fundamental Research Funds for the Central Universities under grant number ZQN-902 and by the Scientific Research Funds of Huaqiao University under grant number 17BS402.

\section*{Conflict of Interest}
The authors have no competing interests to declare that are relevant to the content of this article.
\section*{Data Availability Statement}
Data sharing is not applicable to this article as no datasets were generated or analysed during the current study.

\end{document}